\documentclass[12pt,a4paper]{article}
\usepackage{latexsym,amssymb,amsfonts,amsmath,amsthm,nccmath,float,enumitem,setspace,bm,authblk}
\usepackage[usenames,dvipsnames]{xcolor}
\usepackage[hidelinks]{hyperref}
\usepackage[margin=2cm]{geometry}

\setlist{topsep=3pt,partopsep=0pt,itemsep=1pt,parsep=0pt}

\hypersetup{
    colorlinks,
    linkcolor={red!80!black},
    citecolor={blue!80!black},
    urlcolor={blue!80!black}
}

%\numberwithin{equation}{section}
\newtheorem{Theorem}{Theorem}

\newtheorem{Corollary}{Corollary}
\newtheorem{Conjecture}{Conjecture}

\newtheorem{Lemma}{Lemma}

\newcommand{\Z}{\mathbb{Z}}

\newcommand{\B}{\mathcal{B}}
\newcommand{\F}{\mathcal{F}}
\newcommand{\R}{\mathbb{R}}

\renewcommand{\P}{\mathcal{P}}
\newcommand{\Q}{\mathcal{Q}}
\renewcommand{\R}{\mathcal{R}}

\newcommand{\e}{\epsilon}

\renewcommand{\t}{\tau}
\renewcommand{\l}{\lambda}

\newcommand{\codeg}{\mathop{\mathrm{codeg}}\nolimits}

\def \dev {{\rm dev}}

\def \leq {\leqslant}
\def \geq {\geqslant}

\setstretch{1.1}

\let\oldproofname=\proofname
\renewcommand{\proofname}{\rm\bf{\oldproofname}}

\begin{document}

\title{Nov\'{a}k's conjecture on cyclic Steiner triple systems and its generalization}

\author[a]{Tao Feng}
\author[b]{Daniel Horsley}
\author[c]{Xiaomiao Wang}
\affil[a]{Department of Mathematics, Beijing Jiaotong University, Beijing 100044, P.R. China}
\affil[b]{School of Mathematics, Monash University, VIC 3800, Australia}
\affil[c]{School of Mathematics and Statistics, Ningbo University, Ningbo 315211, P.R. China}

\renewcommand*{\Affilfont}{\small\it}
\renewcommand\Authands{ and }
\date{}

\maketitle
\footnotetext[1]{E-mail address: tfeng@bjtu.edu.cn; Supported by NSFC under Grant 11871095}
\footnotetext[2]{E-mail address: danhorsley@gmail.com; Supported by ARC grants DP150100506 and FT160100048}
\footnotetext[3]{E-mail address: wangxiaomiao@nbu.edu.cn; Supported by NSFC under Grant 11771227}
\begin{abstract}
Nov\'{a}k conjectured in 1974 that for any cyclic Steiner triple systems of order $v$ with $v\equiv 1\pmod{6}$, it is always possible to choose one block from each block orbit so that the chosen blocks are pairwise disjoint. We consider the generalization of this conjecture to cyclic $(v,k,\l)$-designs with $1 \leq \l \leq k-1$. Superimposing multiple copies of a cyclic symmetric design shows that the generalization cannot hold for all $v$, but we conjecture that it holds whenever $v$ is sufficiently large compared to $k$. We confirm that the generalization of the conjecture holds when $v$ is prime and $\l=1$ and also when $\l \leq (k-1)/2$ and $v$ is sufficiently large compared to $k$.
As a corollary, we show that for any $k \geq 3$, with the possible exception of finitely many composite orders $v$, every cyclic $(v,k,1)$-design without short orbits is generated by a $(v,k,1)$-disjoint difference family.
\end{abstract}

\noindent {\bf Keywords}: Steiner triple system; Nov\'{a}k's conjecture; cyclic design; disjoint difference family

\section{Introduction}

Let $V$ be a set of $v$ {\em points}, and $\B$ be a collection of $k$-subsets
of $V$ called {\em blocks}. A pair $(V, \B)$ is called a {\em $(v,k,\lambda)$-design} if every pair of distinct elements of $V$ is contained in precisely $\lambda$ blocks of $\B$. A $(v,3,1)$-design is called a {\em Steiner triple system} of order $v$ and is written as an STS$(v)$.

An {\em automorphism} of a $(v,k,\lambda)$-design $(V, \B)$ is a permutation on $V$ leaving $\B$ invariant. A $(v,k,\lambda)$-design is said to be {\em cyclic} if it admits an automorphism
consisting of a cycle of length $v$. Without loss of generality we identify $V$ with ${\mathbb Z}_v$, the additive group of integers modulo $v$. The blocks of a cyclic $(v,k,\lambda)$-design can be partitioned into orbits under ${\mathbb Z}_{v}$. We can choose any fixed block from each orbit and then call these {\em base blocks}. If the cardinality of an orbit is equal to $v$, the orbit is {\em full}. Otherwise, it is {\em short}. It follows from the orbit-stabilizer theorem that the cardinality of any orbit is a divisor of $v$ and is at least $v/k$. If $\gcd(v,k)=1$, then all orbits of a cyclic $(v,k,\lambda)$-design are full (see \cite[Lemma 1]{k}). It is known that a cyclic STS$(v)$ exists if and only if $v\equiv 1,3\pmod{6}$ and $v\neq 9$ (see \cite[Theorem 7.3]{triplesystem}).

A useful tool for generating cyclic designs is the concept of a difference family. A {\em $(v,k,\l)$-cyclic difference family} is a family $\F$ of $k$-subsets (called {\em base blocks}) of ${\mathbb Z}_{v}$ such that the multiset $\Delta \F:=\{x-y:x,y\in F, x\not=y,F\in \F\}$ contains every element of ${\mathbb Z}_{v}\setminus \{0\}$ exactly $\lambda$ times. Such a family is denoted as a $(v,k,\lambda)$-CDF. It consists of $\lambda(v-1)/(k(k-1))$ base blocks. A $(v,k,\lambda)$-CDF $\F$ can generate a cyclic $(v,k,\lambda)$-design with block-multiset $\dev \F:=\{F+t:F\in \F,t\in \Z_v\}$ (see \cite[Theorem 3.46]{s}). Furthermore, when $\gcd(v,k)=1$, $\F$ is a $(v,k,\lambda)$-CDF if and only if $\dev \F$ is a cyclic $(v,k,\lambda)$-design (see \cite[Proposition VII.1.5]{bjl}).

A $(v,k,\lambda)$-CDF is said to be \emph{disjoint} and written as a $(v,k,\lambda)$-DDF when its base blocks are mutually disjoint. Nov\'{a}k \cite{novak} conjectured in 1974 that for any cyclic STS$(v)$ with $v\equiv 1\pmod{6}$, it is always possible to find a set of $(v-1)/6$ disjoint base blocks which come from different block orbits to form a $(v,3,1)$-DDF (see also \cite[Remark 16.22]{abel} or \cite[Work point 22.5.2]{triplesystem}).

\begin{Conjecture}\textup{(Nov\'{a}k, $1974$) \cite{novak}}\label{weak conjecture}
Every cyclic STS$(v)$ with $v\equiv 1\pmod{6}$ is generated by a $(v,3,1)$-DDF.
\end{Conjecture}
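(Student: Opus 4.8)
The plan is to recast the problem as a choice of translations and then attack it probabilistically for large $v$, with a separate algebraic argument when $v$ is prime. Since $v\equiv 1\pmod 6$ forces $\gcd(v,3)=1$, every orbit is full, so there are exactly $m:=(v-1)/6$ orbits, each of size $v$. Fixing a base block $B_i=\{0,a_i,b_i\}$ in orbit $i$, choosing one block per orbit amounts to choosing translations $t_1,\dots,t_m\in\Z_v$, and the chosen blocks $B_i+t_i$ are pairwise disjoint precisely when $t_i-t_j\notin D_{ij}$ for all $i\neq j$, where $D_{ij}:=\{y-x:x\in B_i,\,y\in B_j\}$. Because the design has $\l=1$, every nonzero difference occurs once across all base blocks, which forces the nine values in each $D_{ij}$ to be distinct and constrains the global overlap structure of the forbidden sets; recording these structural facts first is essential, since they are what must be leveraged beyond crude counting.

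The reason the problem is delicate is that every naive bound misses by a constant factor. In the conflict graph on the $mv$ blocks (adjacency $=$ nonempty intersection), each block meets exactly nine blocks in each of the other $m-1$ orbits, so its maximum degree is $9(m-1)\approx\tfrac32 v$; Haxell's independent-transversal theorem would require each part to have size at least $2\Delta\approx 3v$, three times what is available. Greedy placement fails identically: after choosing disjoint blocks from $i-1$ orbits we have forbidden about $3\cdot 3(i-1)$ translations for orbit $i$, reaching $\approx\tfrac32 v>v$ for the last orbits. Even the symmetric Lov\'asz Local Lemma, with bad events "$B_i+t_i$ meets $B_j+t_j$" of probability $9/v$ and dependency degree $\approx v/3$, gives $e\,p\,(d+1)\approx 3e>1$ and just fails. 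The heart of the matter is that the chosen blocks cover $3m=(v-1)/2$ points, essentially half of $\Z_v$, so the forbidden region is as large as the free region and there is no slack to waste.

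For large $v$ I would therefore use a two-phase semi-random (nibble) argument followed by absorption. First set aside a small reserve of orbits to serve as absorbers, then assign translations to the rest in rounds, each round drawing uniform random translations for a small fraction of the unplaced orbits and keeping only those whose blocks avoid all previously committed points and each other. A standard martingale/codegree analysis should show that after the nibble all but a vanishing fraction of orbits are placed while the covered point set stays quasirandom, so that each remaining orbit still has $\gg 1$ admissible translations, which the reserved absorbers then mop up. The hypothesis $\l\leq(k-1)/2$ — here the boundary case $\l=1$, $k=3$ — is exactly what keeps total coverage at most $(v-1)/2$ and so guarantees the avoidance step always has a positive fraction of legal choices; I expect maintaining quasirandomness through the nibble, and designing absorbers robust enough to finish precisely at the half-coverage threshold, to be the main obstacle.

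For prime $v$ I would instead exploit the field structure via the polynomial method: consider $P(t_1,\dots,t_m)=\prod_{i<j}\prod_{s\in D_{ij}}(t_i-t_j-s)$ and seek a point of $\Z_v^m$ where $P\neq 0$, which is exactly a valid family of translations. Here $\deg P=9\binom{m}{2}\approx\tfrac18 v^2$, below the budget $m(v-1)\approx\tfrac16 v^2$ available when each $t_i$ ranges over all of $\Z_v$, leaving room to hope for a top-degree monomial with every exponent at most $v-1$; by the Combinatorial Nullstellensatz it then suffices to exhibit one such monomial with coefficient nonzero modulo $v$. Extracting and evaluating this coefficient — where the distinctness of the $D_{ij}$ must do the work — is the crux of the prime case, rather than the set-up. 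I would not expect either method to settle all composite $v$, so the general conjecture would remain open apart from finitely many small composite orders.
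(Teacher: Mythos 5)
The first thing to say is that this statement is a \emph{conjecture}: the paper does not prove it, and it remains open. What the paper establishes are exactly the two partial results your plan targets: Theorem~\ref{resultprime} (the prime case, for every block size $k$) and Theorem~\ref{thm:main theorem} (all sufficiently large $v$, for $k\geq 2\lambda+1$). Your proposal, by its own admission (``the general conjecture would remain open apart from finitely many small composite orders''), is a research program for these same two regimes, not a proof of the statement; moreover, both prongs stop precisely at the hard step. In the prime case you set up the Combinatorial Nullstellensatz polynomial $P=\prod_{i<j}\prod_{s\in D_{ij}}(t_i-t_j-s)$ and verify the degree budget ($\approx\tfrac18 v^2$ against $\approx\tfrac16 v^2$), but the entire difficulty is exhibiting a top-degree monomial with every exponent at most $v-1$ whose coefficient is nonzero modulo $v$, and you defer exactly that. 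This is what Karasev and Petrov's theorem (Lemma~\ref{pairwisedisjoint}) supplies: with $d=\lceil k^2/2\rceil$ their coefficient is $(md)!/(d!)^m$, nonzero mod $p$ because $md<p$, and the paper's Theorem~\ref{resultprime} is then a short application of their result. So your prime prong can be closed by citation, but it is not closed as written.

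For large $v$ your nibble-plus-absorption plan is plausible but again is abandoned at the point of difficulty (``maintaining quasirandomness\ldots and designing absorbers robust enough to finish precisely at the half-coverage threshold''), and it is worth seeing how the paper \emph{sidesteps} that threshold rather than confronting it. Instead of choosing one translate per orbit (coverage $\approx v/2$, where, as you correctly diagnose via Haxell, greedy and the Local Lemma, all slack vanishes), the paper aims for something stronger: $s=\lfloor(k-1)/\lambda\rfloor=2$ pairwise disjoint blocks from almost every orbit, i.e.\ near-perfect coverage of the point set. It builds an auxiliary $(k+1)$-uniform hypergraph whose edges are blocks extended by one of $s_i\in\{s,s+1\}$ dummy vertices attached to each orbit, applies the Pippenger--Spencer edge-colouring theorem (Lemma~\ref{thm:pip_sp}), and takes a largest colour class; this yields a partial parallel class with $2$ blocks from all but an $\e^*$-fraction of the orbits (Lemma~\ref{L:applyPipSpe}). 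The deficient orbits are then handled by a deterministic greedy repair argument with induction on a deficiency measure (Lemma~\ref{L:greedy}), which needs only that each deficient orbit contains many ``good'' blocks --- plentiful, since every orbit has at least $v/k$ blocks. In effect, the slack you cannot find at half coverage is manufactured by demanding \emph{two} disjoint blocks per orbit, so that one block per orbit survives any repair; this is exactly where the hypothesis $\lambda\leq(k-1)/2$ enters, rather than as a coverage bound. Your structural observations (the nine distinct differences in each $D_{ij}$, the failure computations for Haxell, greedy and the LLL) are all correct and explain why heavier tools are needed, but as submitted the proposal contains no complete proof of any case of the conjecture, and even its completed form would leave the conjecture open for the remaining composite orders, just as the paper does.
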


Conjecture \ref{weak conjecture} is widely believed to be true but not much progress has been made on it. So far it is only known that Conjecture \ref{weak conjecture} holds for all $v\equiv 1\pmod{6}$ and $v\leq 61$ (see \cite[Theorem 22.3]{triplesystem}). On the other hand, Dinitz and Rodney \cite{dinitzUtil} proved that a $(v,3,1)$-DDF exists for any $v\equiv 1 \pmod{6}$ by taking a suitable $(v,3,1)$-CDF and then replacing each of its base blocks $B_i$ by a suitable translate $B_i+t_i$. For more information on $(v,3,1)$-DDFs with $v\equiv 3 \pmod{6}$, interested readers are referred to \cite{buratti36,dinitz36}.

Recently, using the Combinatorial Nullstellensatz, Karasev and Petrov \cite{karasev} proved the following result.

\begin{Lemma}\textup{\cite[Theorem $2$]{karasev}}\label{pairwisedisjoint}
Let $\mathbb{F}$ be an arbitrary field, and let $m$ and $d$ be positive integers such that
$(md)!/(d!)^m\neq0$ in $\mathbb{F}$. Let $X_1,\ldots, X_m$ and $T_1,\ldots, T_m$ be subsets of $\mathbb{F}$ such
that
$$\forall i < j\ \  |X_i-X_j|\leq2d,\ \ \  \forall i\ \ |T_i|\geq (m-1)d+1,$$
where $X_i-X_j:=\{x-y:x\in X_i,y\in X_j\}$.
Then there exists a system of representatives $t_i\in T_i$ such that the sets
$X_1+t_1,\ldots, X_m+t_m$ are pairwise disjoint.
\end{Lemma}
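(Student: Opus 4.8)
The plan is to deduce the statement from the coefficient form of the Combinatorial Nullstellensatz (Alon), applied to a carefully chosen polynomial in variables $x_1,\dots,x_m$ standing in for the translates $t_1,\dots,t_m$. The first step is to translate the geometric conclusion into an algebraic one: the sets $X_i+t_i$ and $X_j+t_j$ are disjoint precisely when $t_i-t_j\notin X_j-X_i$. Writing $S_{ij}:=X_j-X_i$ for $i<j$ (so $|S_{ij}|=|X_i-X_j|\le 2d$, since negation is a bijection), the required system of representatives exists as soon as we can find $t_i\in T_i$ with $t_i-t_j\notin S_{ij}$ for every $i<j$.

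Next I would introduce the polynomial
$$P(x_1,\dots,x_m)=\prod_{1\le i<j\le m}\Big[(x_i-x_j)^{\,2d-|S_{ij}|}\prod_{c\in S_{ij}}(x_i-x_j-c)\Big].$$
Padding each pair's factor with the extra power $(x_i-x_j)^{2d-|S_{ij}|}$ (the exponent is nonnegative as $|S_{ij}|\le 2d$) ensures $\deg P=\sum_{i<j}2d=m(m-1)d$ and that the top-degree part of $P$ is exactly $\widehat P=\prod_{i<j}(x_i-x_j)^{2d}$, independent of the sets $S_{ij}$. This padding can only add constraints of the form $t_i\neq t_j$, which is harmless since we merely need the existence of one point where $P$ is nonzero: if $P(t_1,\dots,t_m)\neq 0$ then for every $i<j$ and every $c\in S_{ij}$ we have $t_i-t_j\neq c$, which is precisely the disjointness sought. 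To invoke the Nullstellensatz I would target the monomial $\prod_{i=1}^m x_i^{(m-1)d}$; its total degree $m(m-1)d$ equals $\deg P$, so its coefficient in $P$ coincides with its coefficient in $\widehat P$, the $2d$-th power of the Vandermonde product.

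The heart of the argument, and the step I expect to be the main obstacle, is to show that this coefficient is nonzero in $\mathbb{F}$. I would identify it with the multinomial coefficient via Dyson's constant term identity. Using $(1-x_i/x_j)(1-x_j/x_i)=-(x_i-x_j)^2/(x_ix_j)$ together with $\prod_{i<j}(x_ix_j)^d=\prod_i x_i^{(m-1)d}$, one rewrites $\prod_{i\ne j}(1-x_i/x_j)^d=(-1)^{d\binom{m}{2}}\big(\prod_{i<j}(x_i-x_j)^{2d}\big)\big/\prod_i x_i^{(m-1)d}$. Taking constant terms and applying Dyson's identity $\mathrm{CT}\,\prod_{i\ne j}(1-x_i/x_j)^d=(md)!/(d!)^m$ shows that the coefficient of $\prod_i x_i^{(m-1)d}$ in $\prod_{i<j}(x_i-x_j)^{2d}$ equals $(-1)^{d\binom{m}{2}}(md)!/(d!)^m$. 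By hypothesis $(md)!/(d!)^m\neq 0$ in $\mathbb{F}$, and the sign is a unit, so this coefficient is nonzero in $\mathbb{F}$.

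Finally I would conclude by the Combinatorial Nullstellensatz: since $\deg P=\sum_{i=1}^m (m-1)d$, the coefficient of $\prod_i x_i^{(m-1)d}$ is nonzero, and $|T_i|\ge (m-1)d+1>(m-1)d$ for each $i$, there exist $t_i\in T_i$ with $P(t_1,\dots,t_m)\neq 0$. By the reduction above these $t_i$ yield the desired pairwise disjoint translates. The only genuinely nontrivial ingredient is the constant-term evaluation, everything else being bookkeeping; to keep the paper self-contained I would either cite Dyson's identity (in Good's form) directly or include a short recursive proof computing only the single coefficient of $\prod_i x_i^{(m-1)d}$ in the power of the Vandermonde that is actually needed.
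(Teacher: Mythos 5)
The paper does not prove this lemma itself---it quotes it verbatim from Karasev and Petrov---but your argument is a correct reconstruction of precisely the proof given there: reduce disjointness of the translates to the conditions $t_i-t_j\notin X_j-X_i$, encode these in a polynomial padded so that its top homogeneous part is $\prod_{i<j}(x_i-x_j)^{2d}$, identify the coefficient of $\prod_i x_i^{(m-1)d}$ as $\pm(md)!/(d!)^m$ via Dyson's constant term identity, and finish with the Combinatorial Nullstellensatz. So your proposal is correct and takes essentially the same approach as the cited source.
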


We now apply Lemma \ref{pairwisedisjoint} to show that Conjecture \ref{weak conjecture} is true whenever $v$ is a prime.

\begin{Theorem}\label{resultprime}
Let $k\geq 2$ and let $p$ be a prime. Every cyclic $(p,k,1)$-design is generated by a $(p,k,1)$-DDF.
\end{Theorem}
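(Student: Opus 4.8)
The plan is to apply Lemma~\ref{pairwisedisjoint} over the field $\Z_p$, taking the base blocks of the design as the sets $X_i$ and translating each one so that the translated blocks become pairwise disjoint. First I would dispose of the degenerate case $k=p$, in which the unique block is all of $\Z_p$ and the design is trivially its own DDF; so I may assume $2\leq k\leq p-1$, whence $\gcd(p,k)=1$ and, by \cite[Lemma 1]{k}, every orbit is full of length $p$. Writing $m=(p-1)/(k(k-1))$ for the number of orbits and choosing one base block $B_i$ from each, the family $\{B_1,\dots,B_m\}$ is a $(p,k,1)$-CDF. Since replacing any $B_i$ by a translate $B_i+t_i$ leaves its orbit, and hence the whole design, unchanged while preserving the within-block differences, it suffices to produce translates $B_1+t_1,\dots,B_m+t_m$ that are pairwise disjoint: these automatically form the required $(p,k,1)$-DDF.

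To achieve this I would invoke Lemma~\ref{pairwisedisjoint} with $\mathbb{F}=\Z_p$, $X_i=B_i$, $T_i=\Z_p$, and $d=\lceil k^2/2\rceil$. The first hypothesis is immediate, since $|B_i-B_j|\leq|B_i|\,|B_j|=k^2\leq 2d$ for all $i<j$. The remaining two requirements, namely $|T_i|=p\geq(m-1)d+1$ and $(md)!/(d!)^m\neq 0$ in $\Z_p$, I would both deduce from the single inequality $md\leq p-1$.

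The crux is thus to verify $md\leq p-1$. Because $m=(p-1)/(k(k-1))$, this reduces to the elementary claim $\lceil k^2/2\rceil\leq k(k-1)$, which holds for every $k\geq 2$, with equality at $k=2$. Granting it, $md<p$ forces all partial sums $d,2d,\dots,md$ to stay below $p$, so adding $m$ copies of $d$ in base $p$ produces no carries; by Kummer's theorem the $p$-adic valuation of $(md)!/(d!)^m$ is then $0$, so this coefficient is nonzero in $\Z_p$. Likewise $(m-1)d+1=md-d+1\leq p-d<p=|T_i|$. With all hypotheses confirmed, Lemma~\ref{pairwisedisjoint} supplies translates $t_i\in\Z_p$ making the $B_i+t_i$ pairwise disjoint, which completes the argument. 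I expect the non-vanishing condition to be the main obstacle: everything else is bookkeeping, whereas controlling the divisibility of the multinomial coefficient $(md)!/(d!)^m$ by $p$ is the one genuinely arithmetic point, and the whole proof hinges on choosing $d$ so that $md<p$ and invoking the no-carry criterion.
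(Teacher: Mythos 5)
Your proof is correct and is essentially the same as the paper's: it invokes Lemma~\ref{pairwisedisjoint} over $\Z_p$ with the identical choices $X_i=B_i$, $T_i=\Z_p$ and $d=\lceil k^2/2\rceil$, and verifies the same hypotheses. The only difference is that you spell out the arithmetic the paper leaves implicit --- the inequality $\lceil k^2/2\rceil\leq k(k-1)$ giving $md<p$, and the no-carry/Kummer argument for $(md)!/(d!)^m\not\equiv 0\pmod{p}$ --- which is a welcome addition but not a different route.
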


\proof We may assume $p>k$ because otherwise the result is trivial. Since $\gcd(p,k)=1$, a cyclic $(p,k,1)$-design has $m=(p-1)/(k(k-1))$ full orbits and no short orbits. Let $B_1,\ldots, B_m$ be base blocks of a cyclic $(p,k,1)$-design and let $d=\lceil k^2/2\rceil$. Then $|B_i-B_j|\leq2d$ for any $1\leq i<j\leq m$. Let $T_1=\cdots=T_m=\mathbb Z_p$. Then $|T_i|=p\geq(m-1)d+1$ for $k\geq2$. Since $md<p$ when $k\geq2$, $(md)!/(d!)^m\not\equiv 0 \pmod{p}$. Therefore, by Lemma \ref{pairwisedisjoint}, there exists a system of representatives $t_i\in T_i$ such that $B_1+t_1,\ldots, B_m+t_m$ are pairwise disjoint. So $B_1+t_1,\ldots, B_m+t_m$ form a $(p,k,1)$-DDF. \qed

Theorem \ref{resultprime} motivates us to present the following conjecture on cyclic $(v,k,1)$-designs, which also allows for designs with short orbits.

\begin{Conjecture}\label{conj:any k}
For any cyclic $(v,k,1)$-design, it is always possible to choose one block from each block orbit so that the chosen blocks are pairwise disjoint.
\end{Conjecture}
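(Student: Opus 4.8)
The plan is to follow the strategy of Theorem~\ref{resultprime}: reduce the problem to choosing a disjoint system of translates of the base blocks, and then see how far the polynomial method behind Lemma~\ref{pairwisedisjoint} can be carried into the composite case. First I would dispose of the short orbits, where for $\l=1$ a short count of differences is decisive. A base block $B$ in a short orbit is a union of cosets of its stabiliser $H=\{t\in\Z_v:B+t=B\}$, so $|H|\mid k$; and since $B$ is $H$-invariant, each nonzero element of $H$ occurs $k$ times as a difference within $B$, hence $kv/|H|$ times over the orbit of length $v/|H|$. As this cannot exceed the budget $\l v=v$, we get $|H|=k$, so $B$ is a single coset. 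Thus a cyclic $(v,k,1)$-design has at most one short orbit, occurring only when $k\mid v$ and consisting of the $v/k$ cosets of the order-$k$ subgroup $\langle v/k\rangle$. I would pick any one such coset as its representative, delete those $k$ points, and pass to the full orbits.

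Write $B_1,\dots,B_m$ for the base blocks of the full orbits, so $m\approx v/(k(k-1))$ and the chosen blocks would cover only about a $1/(k-1)$ fraction of $\Z_v$; there is room in principle. Disjointness of the translates $B_i+t_i$ is exactly the condition $t_i-t_j\notin B_j-B_i$ for all $i<j$, and since $|B_i-B_j|\leq 2d$ with $d=\lceil k^2/2\rceil$ this is precisely the situation of Lemma~\ref{pairwisedisjoint}. Over a field the lemma needs $|T_i|\geq(m-1)d+1$ together with $md<|\mathbb F|$ (which guarantees $(md)!/(d!)^m\neq0$), and both hold when $v$ is prime, which is exactly Theorem~\ref{resultprime}. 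To reach composite $v$ I would first try to transfer disjointness through a ring quotient: if $p\mid v$ and $\pi\colon\Z_v\to\Z_p$ is reduction modulo $p$, then pairwise disjointness of the images $\pi(B_i)+\pi(t_i)$ in the field $\Z_p$ implies pairwise disjointness of $B_i+t_i$ in $\Z_v$, so Lemma~\ref{pairwisedisjoint} applies over $\Z_p$. This settles those $v$ dominated by a single prime factor, but it degrades as soon as $v$ has two comparable prime factors, because then $m$ exceeds $p$ and the $m$ images cannot be separated inside $\Z_p$ at all.

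The main obstacle, therefore, is the genuinely composite case, where $\Z_v$ is not a field and no single residue projection can pull the count into one. Here the exact, threshold-sharp nature of Lemma~\ref{pairwisedisjoint} is what one needs and what is missing: a purely probabilistic substitute is too weak, since independent uniform translates produce $\Theta(v)$ colliding pairs in expectation and the Lov\'asz Local Lemma misses the required bound by a factor growing with $k$, so any viable argument must exploit the difference structure of the design rather than treat the blocks as generic $k$-sets. What I would aim for is a Combinatorial Nullstellensatz-type nonvanishing statement valid over $\Z_v$, or an absorption/iteration scheme that realises the $1/(k-1)$ density slack directly. For $v$ large this slack can be exploited: it is exactly the regime of the $\l\leq(k-1)/2$, $v\gg k$ result, and specialised to $\l=1$ and $k\geq3$ it settles Conjecture~\ref{conj:any k} for all but finitely many composite orders (cf.\ the corollary). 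Closing that last gap, namely the finitely many small composite $v$ with $k\mid v$, where even the single short orbit must be accommodated with almost no slack, is where I expect the real difficulty to remain.
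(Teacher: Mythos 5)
You were asked to prove a statement that is, in the paper, a \emph{conjecture}: the paper itself does not prove Conjecture~\ref{conj:any k} in full, establishing only the prime case (Theorem~\ref{resultprime}) and the case of $v$ sufficiently large relative to $k$ (Theorem~\ref{thm:main theorem}, proved via Theorem~\ref{thm:asympt} by applying the Pippenger--Spencer theorem, Lemma~\ref{thm:pip_sp}, to an auxiliary $(k+1)$-uniform hypergraph in Lemma~\ref{L:applyPipSpe}, followed by the greedy induction of Lemma~\ref{L:greedy}). Your proposal is likewise not a proof, and it is candid about this, so the honest comparison is between your partial results and the paper's. Two parts of what you write are sound: your prime case is exactly Theorem~\ref{resultprime}, and your short-orbit analysis for $\lambda=1$ (at most one short orbit, necessarily consisting of the $v/k$ cosets of $\langle v/k\rangle$) is correct and in fact sharper than the paper's Lemma~\ref{L:fewShort} in this special case, though after fixing a coset as the short-orbit representative you cannot simply ``delete those $k$ points''; you must instead force every full-orbit translate to avoid that coset, which costs at most $k^2$ elements from each $T_i$. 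But your route for composite $v$ beyond this amounts to citing the paper's own large-$v$ theorem, which is circular as a proof attempt: the hypergraph-colouring and induction machinery behind it is precisely what would need to be supplied. (As a side remark, the literal statement admits a degenerate counterexample at $v=4$, $k=2$: every difference-$1$ edge of $K_4$ meets both $\{0,2\}$ and $\{1,3\}$, so the conjecture must implicitly intend $k\geq 3$.)

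The concrete false step is the projection claim, namely that reducing modulo a prime $p\mid v$ ``settles those $v$ dominated by a single prime factor.'' It settles nothing. Applying Lemma~\ref{pairwisedisjoint} over $\mathbb{Z}_p$ requires $|T_i|\geq (m-1)d+1$ with $T_i\subseteq\mathbb{Z}_p$, hence $p\geq (m-1)d+1$. By Lemma~\ref{L:fewShort} the number of full orbits satisfies $m\geq \frac{v-1}{k(k-1)}-2\sqrt{k}$, and $d=\lceil k^2/2\rceil$, so
\[
(m-1)d \;\geq\; \frac{(v-1)k}{2(k-1)} - O\bigl(k^{5/2}\bigr) \;=\; \frac{v-1}{2}+\frac{v-1}{2(k-1)}-O\bigl(k^{5/2}\bigr),
\]
which exceeds $v/2$ once $v$ is large compared with $k$ --- the crucial point being the factor $\frac{k}{k-1}>1$. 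But every prime $p$ properly dividing $v$ satisfies $p\leq v/2$. So the hypothesis of Lemma~\ref{pairwisedisjoint} can never be met in the quotient, no matter how dominant the prime factor; the failure threshold is $(m-1)d+1>p$, not $m>p$ as you assert, and the prime case only goes through because there $p=v$ itself. This is the same density tension (chosen blocks occupy roughly a $1/(k-1)$ fraction of the points, while the polynomial method demands slack proportional to $md\approx \frac{vk}{2(k-1)}$) that makes the composite case genuinely hard, and it is exactly why the paper abandons the Nullstellensatz approach for composite orders and instead builds the nibble-plus-absorption argument of Lemmas~\ref{L:applyPipSpe} and~\ref{L:greedy}.
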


The existence of $(v,k,1)$-DDFs is in general quite a hard problem. Conjecture \ref{conj:any k}, if true, would reduce the existence of $(v,k,1)$-DDFs to the existence of $(v,k,1)$-CDFs. The following results on CDFs are known in the literature.

\begin{Lemma}\label{lem:456-wilson}\hspace{1mm}
\begin{itemize}
\item[$(1)$]\textup{\cite{chen45}} For any prime $p\equiv 1\pmod{12}$, there exists a $(p, 4, 1)$-CDF.
\item[$(2)$]\textup{\cite{chen45}} For any prime $p\equiv 1\pmod{20}$, there exists a $(p, 5, 1)$-CDF.
\item[$(3)$]\textup{\cite{chen6}} For any prime $p\equiv 1\pmod{30}$ and $p\neq 61$, there exists a $(p, 6, 1)$-CDF.
\item[$(4)$]\textup{\cite{bp}} Let $p\equiv 1\pmod{k(k-1)}$ be a prime. Then a $(p,k,1)$-CDF exists if $p>\binom{k}{2}^{2k}$.
\end{itemize}
\end{Lemma}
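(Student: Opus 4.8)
All four parts assert that the necessary congruence $p \equiv 1 \pmod{k(k-1)}$ is (almost) sufficient for a $(p,k,1)$-CDF, so the natural plan is to treat them through a single cyclotomic framework and then specialise. Write $e = k(k-1)$ and $m = (p-1)/e$, fix a primitive root $g$ of $\mathbb{Z}_p$, and let $H = \langle g^{e}\rangle$ be the subgroup of $e$-th powers, of index $e$ and order $m$, with cosets $H_s = g^{s}H$ for $0 \le s < e$. The central device is Wilson's observation that it suffices to find one base block $B = \{b_1,\dots,b_k\}$ whose $e$ ordered differences $b_i - b_j$ ($i \ne j$) form a transversal of $H_0,\dots,H_{e-1}$. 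Indeed, if $B_j = g^{ej}B$ for $0 \le j < m$, then $g^{ej}$ runs over $H$, and since $\{hd : h \in H\} = dH$ for each nonzero $d$, the combined difference multiset $\bigcup_{h \in H} h\,\Delta B$ covers every coset, hence all of $\mathbb{Z}_p\setminus\{0\}$, exactly once; the blocks $B_0,\dots,B_{m-1}$ are then the required family. This reduction is available whenever $-1 \notin H$, so that the differences, which occur in $\pm$ pairs, can indeed occupy $e$ distinct cosets.

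For part $(4)$ I would establish the existence of such a starter by a character-sum count. Normalising $b_1 = 0$ and writing the indicator that $b_i - b_j$ lies in a prescribed coset $H_s$ as $\tfrac1e \sum_{\chi^{e}=1} \overline{\chi}(g^{s})\chi(b_i - b_j)$, one expands the number of admissible tuples $(b_2,\dots,b_k)$ over all assignments of the $\binom{k}{2}$ unordered differences to distinct cosets. The term in which every character is trivial is the main term, of order $p^{k-1}/e^{\binom{k}{2}}$ up to constants, while each remaining term is a product of sums $\sum_x \chi\bigl(f(x)\bigr)$ with $f$ of low degree, so Weil's bound controls it by $(\deg f - 1)\sqrt{p}$. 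Comparing the main term with the accumulated error shows the count is positive once $p$ exceeds an explicit function of $k$; keeping track of how many nontrivial character products occur and of the polynomial degrees involved is exactly what yields the stated threshold $p > \binom{k}{2}^{2k}$. A short inclusion--exclusion removes the degenerate tuples in which two $b_i$ coincide or two differences share a coset, contributing only lower-order terms.

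Parts $(1)$--$(3)$ are the cases $k = 4,5,6$, where the threshold from part $(4)$ dwarfs the primes that actually need checking, so a purely asymptotic argument does not close the gap. Here the plan is to combine the character-sum estimate, which disposes of all large $p$ in the relevant residue class, with a direct verification for the finitely many small primes, typically via explicit starter constructions adapted to $k = 4,5,6$ or a short computer search. This two-regime strategy is also what surfaces the genuine exception in part $(3)$: for $k = 6$ the small-prime check shows that no $(61,6,1)$-CDF exists, so $p = 61$ must be excluded by hand rather than absorbed into the asymptotics.

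I expect two steps to carry the real difficulty. The first is the error analysis in part $(4)$: turning the Weil bound into a clean, fully explicit threshold demands careful counting of the nontrivial character products, tight control of the associated degrees, and a check that no secondary main term hides among the mixed terms --- keeping this tight enough to reach $\binom{k}{2}^{2k}$ rather than a weaker bound is delicate. The second is the parity wrinkle in the reduction: when $-1 \in H$ (equivalently $m$ even) the $\pm$ pairs are forced into common cosets, the single-starter transversal no longer exists, and one must instead work with several starters or count block families directly. By contrast, the reduction itself and the small-case verifications for $k = 4,5,6$ are comparatively routine.
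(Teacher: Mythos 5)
The paper does not actually prove this lemma: it is a list of results imported from Chen--Zhu \cite{chen45,chen6} and Buratti--Pasotti \cite{bp}, so your proposal can only be judged against the proofs in those references. Those proofs do follow the broad strategy you describe --- Wilson's cyclotomic reduction to a single ``starter'' block whose differences are spread over cyclotomic cosets, a Weil character-sum estimate to show such a starter exists for large $p$, and finite checks for the small primes when $k=4,5,6$ (with $p=61$ a genuine nonexistence exception, as you say). In that sense you have identified the right family of ideas.

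However, your reduction is built on the wrong subgroup, and this is a genuine gap rather than a loose end. You take $H$ of index $e=k(k-1)$ and ask for the $k(k-1)$ ordered differences of $B$ to form a transversal of its cosets. As you yourself note, this is impossible whenever $-1\in H$, i.e.\ whenever $m=(p-1)/(k(k-1))$ is even, because the differences occur in pairs $\pm d$ lying in a common coset. This is not a marginal case: it happens for roughly half of the primes in each residue class (e.g.\ $p=73$ for $k=4$), so for infinitely many of the primes covered by the lemma your character-sum machinery is counting an empty set, and your proposed repairs (``several starters or count block families directly'') are left unspecified. The standard fix --- and the one used in the cited papers, going back to Wilson --- is to take $H$ of index $e=k(k-1)/2$ instead. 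Writing $p-1=2em$, we have $-1=g^{(p-1)/2}=(g^{m})^{e}\in H$ automatically, so the pairs $\pm d$ sit inside single cosets; one then requires the $\binom{k}{2}$ unordered differences $b_j-b_i$ ($i<j$) to represent the $e$ cosets exactly once, and takes the family $\{hB : h\in S\}$ where $S$ is a set of representatives of the cosets of $\{1,-1\}$ in $H$. This gives $|H|/2=(p-1)/(k(k-1))$ base blocks whose differences cover $\mathbb{Z}_p\setminus\{0\}$ exactly once, with no parity dichotomy at all, and your Weil-type count (whose main-term/error-term structure is otherwise reasonable, and which is indeed how \cite{bp} reaches the threshold $p>\binom{k}{2}^{2k}$) then applies uniformly to every prime $p\equiv 1\pmod{k(k-1)}$.
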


As a corollary of Theorem \ref{resultprime} together with Lemma \ref{lem:456-wilson}, we obtain the following existence results on DDFs.

\begin{Theorem}
Let $p\equiv 1\pmod{k(k-1)}$ be a prime.
\begin{itemize}
\item[$(1)$] There exists a $(p,k,1)$-DDF for each $k\in\{4,5,6\}$ and $(k,p)\neq (6,61)$.
\item[$(2)$] There exists a $(p,k,1)$-DDF whenever $p>\binom{k}{2}^{2k}$.
\end{itemize}
\end{Theorem}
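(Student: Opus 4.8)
The plan is to treat this statement as a direct corollary, chaining two facts together: the existence of cyclic difference families supplied by Lemma \ref{lem:456-wilson}, and the result of Theorem \ref{resultprime} that every cyclic $(p,k,1)$-design arises from a disjoint difference family. Since a $(p,k,1)$-CDF generates a cyclic $(p,k,1)$-design, it suffices to produce a CDF under the stated hypotheses and then invoke Theorem \ref{resultprime} to upgrade it to a DDF. No new combinatorial construction is needed; the substance has already been done elsewhere.

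Concretely, I would argue as follows. For part $(1)$, fix $k\in\{4,5,6\}$ and note that the congruence $p\equiv 1\pmod{k(k-1)}$ is precisely $p\equiv 1\pmod{12}$, $p\equiv 1\pmod{20}$, and $p\equiv 1\pmod{30}$, respectively, so that parts $(1)$--$(3)$ of Lemma \ref{lem:456-wilson} (with the single exception $(k,p)=(6,61)$) guarantee a $(p,k,1)$-CDF $\F$. For part $(2)$, part $(4)$ of the same lemma gives such an $\F$ whenever $p>\binom{k}{2}^{2k}$. In either case $p$ is prime with $p>k$, so $\gcd(p,k)=1$, and hence $\dev\F$ is a cyclic $(p,k,1)$-design with only full orbits. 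Theorem \ref{resultprime} then applies verbatim: this cyclic design is generated by a $(p,k,1)$-DDF, which is exactly the desired conclusion. Equivalently, one may simply translate the base blocks of $\F$ so that they become pairwise disjoint; since a translation does not alter the difference multiset, the translated family remains a $(p,k,1)$-CDF and is now disjoint.

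Because the hard work is external to this statement — the combinatorial existence of CDFs is cited, and the translation-to-disjointness is Theorem \ref{resultprime} — there is no genuine obstacle in this proof. The only points requiring care are bookkeeping: checking that $k(k-1)$ coincides with the moduli $12$, $20$, $30$ listed in Lemma \ref{lem:456-wilson} for $k=4,5,6$, confirming that the excluded case $(6,61)$ is inherited from that lemma rather than newly introduced, and verifying that $p>k$ (which follows from $p\equiv 1\pmod{k(k-1)}$ together with $k\geq 4$) so that $\gcd(p,k)=1$ and the CDF--design correspondence is available.
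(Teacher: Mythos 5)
Your proposal is correct and matches the paper's own argument exactly: the paper presents this theorem as an immediate corollary of Theorem~\ref{resultprime} combined with Lemma~\ref{lem:456-wilson}, which is precisely the chain (CDF exists $\Rightarrow$ $\dev\F$ is a cyclic $(p,k,1)$-design $\Rightarrow$ Theorem~\ref{resultprime} upgrades it to a DDF) that you spell out. Your bookkeeping checks (moduli $12$, $20$, $30$; the inherited exception $(6,61)$; $p>k$ so $\gcd(p,k)=1$) are all sound.
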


We remark that by using Weil's theorem on estimates of character sums, Wu, Yang and Huang \cite{wuActa} also established the existence of a $(p,4,1)$-DDF for any prime $p\equiv 1\pmod{12}$. We also observe that the main result of \cite{EhaGloJoo} shows that, for fixed $k$ and large $v$, one can find a family $\mathcal{F}$ of $(1-o(1))\frac{v-1}{k(k-1)}$ pairwise disjoint base blocks of size $k$ such that $\Delta \mathcal{F}$ contains each difference at most once. This is accomplished by letting $H$ be the disjoint union of $(1-o(1))\frac{v-1}{k(k-1)}$ copies of $K_k$ and applying \cite[Theorem 1.2]{EhaGloJoo} to find a rainbow copy of $H$ in the complete graph on $\Z_v$ with edges coloured according to their differences.

In this paper, we shall provide a proof of Conjecture \ref{conj:any k} when $v$ is sufficiently large compared to $k$. In fact, we consider a more general setting. We shall examine cyclic $(v,k,\lambda)$-designs with $k \geq 2\lambda+1$. As the main result of this paper, we prove Theorem~\ref{thm:main theorem} below. In fact we prove a stronger statement which sometimes guarantees the existence of a family of mutually disjoint blocks containing many blocks from each orbit (see Theorem~\ref{thm:asympt}).

\begin{Theorem}\label{thm:main theorem}
Let $k$ and $\lambda$ be fixed positive integers such that $k\geq 2\l+1$. There exists an integer $v_0$ such that, for any cyclic $(v,k,\lambda)$-design with $v\geq v_0$, it is always possible to choose one block from each block orbit so that the chosen blocks are pairwise disjoint.
\end{Theorem}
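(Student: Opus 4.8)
The plan is to peel off the short orbits first and then solve the core problem for full orbits by a semi-random (absorption) argument, exploiting that the hypothesis $k\ge 2\lambda+1$ forces the chosen blocks to occupy at most half of $\mathbb{Z}_v$. First I would show that the number of short orbits is bounded by a constant depending only on $k$ and $\lambda$. If a base block $B$ has nontrivial stabilizer, that stabilizer is the unique subgroup $H$ of $\mathbb{Z}_v$ of some order $h$ with $h\mid\gcd(v,k)$ and $h\ge 2$, and $B$ is a union of $k/h$ cosets of $H$. For each nonzero $g\in H$ the orbit of $B$ already covers every pair of difference $g$ exactly $k/h$ times, so the design property forces $k/h\le\lambda$, i.e.\ $h\ge k/\lambda$, and the same count bounds the number of orbits with a given stabilizer by $\lambda h/k\le\lambda$. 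Since there are at most $\tau(k)$ admissible values of $h$, there are $O_{k,\lambda}(1)$ short orbits in total. As $v$ is large I can greedily choose pairwise disjoint representatives for these $O(1)$ orbits (each still offers $\ge v/k$ choices while only $O(k^2)$ are forbidden) and then regard the $O(k)$ points they occupy as a fixed obstacle.

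This reduces the theorem to choosing, for each full orbit with base block $B_i$, a translate $t_i$ so that the sets $B_i+t_i$ are pairwise disjoint and avoid the reserved points; equivalently $t_i-t_j\notin B_j-B_i$ for all $i\ne j$. I would recast this as a rainbow matching problem. Viewing all blocks of the design as the edges of a $k$-uniform hypergraph on the vertex set $\mathbb{Z}_v$, the definition of a $(v,k,\lambda)$-design says precisely that every two points lie in exactly $\lambda$ edges and every point in $r=\lambda(v-1)/(k-1)$ edges, while the edges split into the $m'\approx\lambda(v-1)/(k(k-1))$ orbit colour classes. The task is then to find a matching that meets every colour exactly once. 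The hypothesis $k\ge 2\lambda+1$ enters quantitatively: it gives $m'k\le(v-1)/2$, so such a matching covers at most half of $\mathbb{Z}_v$, leaving a linear amount of free space to manoeuvre in. (Note that Karasev--Petrov, Lemma~\ref{pairwisedisjoint}, which drove Theorem~\ref{resultprime}, needs a field and does not survive passage to composite $v$, so a genuinely different method is required here.)

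Because the codegree is the constant $\lambda$ and the degrees are regular, this is exactly the regime for the semi-random (nibble) method and conflict-free hypergraph matching theorems, where ``conflict'' means either sharing a vertex or sharing an orbit colour. The scheme I would follow is: reserve beforehand a random absorbing sub-collection of orbits together with a reservoir of free translates; run a random-greedy / nibble process to translate almost all remaining orbits into an almost rainbow matching, leaving only a sparse, well-distributed set of still-unplaced orbits; and finally absorb those leftover orbits into the reserved space. The uniform codegree $\lambda$ supplies the pseudorandomness the nibble analysis needs, and the half-empty target supplies the room for the absorber to operate.

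The main obstacle is that the relevant conflict density is constant rather than vanishing: a uniformly random system of translates leaves $\Theta(v)$ intersecting pairs of blocks, and each block meets $\Theta(\lambda)$ others in expectation, so the Lovász Local Lemma, Haxell's independent-transversal bound, and naive greedy placement all fail by a constant factor. Driving the number of conflicts all the way down to zero while still using exactly one block from every orbit is therefore not a one-shot argument. I expect the hard part to be designing an absorber robust enough to reroute \emph{every} residual conflict within the linear slack guaranteed by $k\ge 2\lambda+1$, and verifying the boundedness and codegree hypotheses of the absorption/nibble step from the difference-family structure; the short-orbit reduction and the hypergraph reformulation are comparatively routine.
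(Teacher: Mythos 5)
Your short-orbit reduction and the reformulation for full orbits are sound and broadly parallel the paper's own steps (Lemma~\ref{L:fewShort} and the greedy choice of the class $\mathcal{R}$ inside the proof of Theorem~\ref{thm:asympt}). The genuine gap is at the core: you reduce the theorem to finding a \emph{full rainbow matching} --- exactly one block from every orbit colour class, pairwise disjoint --- and propose to obtain it by a nibble followed by absorption, but you never construct the absorber. You yourself identify this as the hard part, after correctly observing that the Local Lemma, Haxell-type bounds and naive greedy all fail by constant factors. Finding a matching that meets \emph{every} colour class (rather than a $(1-o(1))$-fraction of them, which is what off-the-shelf rainbow/nibble results such as the one cited in the paper's introduction deliver) is exactly the content of the theorem, so asserting that a sufficiently robust absorber exists is assuming what must be proved. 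As it stands the proposal is a plan whose decisive step is missing.

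The paper's proof shows that this head-on attack can be avoided entirely, and it uses the hypothesis $k\geq 2\lambda+1$ in a structurally different way from you. You read the hypothesis as saying the final system of blocks occupies at most half of $\mathbb{Z}_v$, giving the absorber room to work. The paper instead uses it to guarantee $s=\lfloor (k-1)/\lambda\rfloor\geq 2$, and then aims for $s$ blocks per orbit rather than one: Lemma~\ref{L:applyPipSpe} applies the Pippenger--Spencer edge-colouring theorem (Lemma~\ref{thm:pip_sp}) to an auxiliary $(k+1)$-uniform hypergraph to produce a partial parallel class containing $s$ pairwise disjoint blocks from almost every full orbit, and Lemma~\ref{L:greedy} then repairs the deficient orbits by a greedy induction: inserting a good block into a deficient orbit destroys at most one block in at most $k$ orbits that had $s$, which is affordable precisely because $s-1\geq 1$ blocks remain there. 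The outcome (Theorem~\ref{thm:asympt}) is a partial parallel class with at least $s-1\geq 1$ blocks in every orbit, and picking one block per orbit from it proves the theorem. That slack --- tolerating the loss of one block per orbit instead of demanding a perfect one-per-orbit selection in a single shot --- is the key idea your proposal lacks, and it is what lets the paper get by with an off-the-shelf nibble result plus elementary counting, with no absorption at all.
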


Combining Theorems~\ref{resultprime} and \ref{thm:main theorem} yields the following corollary.

\begin{Corollary}
Let $k \geq 3$ be a fixed integer. With the possible exception of finitely many composite orders $v$, every cyclic $(v,k,1)$-design without short orbits is generated by a $(v,k,1)$-DDF.
\end{Corollary}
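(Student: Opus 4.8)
The plan is to convert the purely combinatorial conclusion of Theorem~\ref{thm:main theorem} --- that one block can be chosen from each orbit with the chosen blocks pairwise disjoint --- into the difference-family language of the statement, and then to mop up the finitely many small orders using Theorem~\ref{resultprime}. The bridge between the two formulations is the observation that, for a cyclic design with no short orbits, any transversal of the block orbits is automatically a difference family.

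First I would record this bridging fact precisely. Suppose $(\Z_v,\B)$ is a cyclic $(v,k,1)$-design all of whose orbits are full, and let $\F=\{B_1,\ldots,B_m\}$ be any choice of one base block per orbit. Because each orbit is full, $\dev \F$ lists each block of $\B$ exactly once, so $\dev \F$ is a $(v,k,1)$-design. I would then count, for a fixed nonzero $\delta$, the number of translates $B_i+t$ containing the pair $\{0,\delta\}$: such a translate requires $-t,\delta-t\in B_i$, so these translates are in bijection with the ordered pairs $(x',x)\in B_i\times B_i$ satisfying $x'-x=\delta$, and hence their number equals the multiplicity of $\delta$ in $\Delta \F$. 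Since $\{0,\delta\}$ lies in exactly one block of the design, this multiplicity is $1$. Thus every nonzero element of $\Z_v$ occurs exactly once in $\Delta \F$, so $\F$ is a $(v,k,1)$-CDF. Consequently, choosing one block per orbit so that the chosen blocks are pairwise disjoint is precisely the same as exhibiting a $(v,k,1)$-DDF that generates the design.

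Let $v_0$ be the constant supplied by Theorem~\ref{thm:main theorem} for the fixed value $k$ and $\lambda=1$, which is admissible since $k\geq 3=2\lambda+1$. For every $v\geq v_0$ and every cyclic $(v,k,1)$-design without short orbits, Theorem~\ref{thm:main theorem} furnishes a pairwise-disjoint transversal of the orbits, which by the previous paragraph is a generating $(v,k,1)$-DDF. Hence no order $v\geq v_0$ can be an exception. It then remains to handle the finitely many orders $v<v_0$: among these, every prime $p$ is already settled by Theorem~\ref{resultprime}, so the only orders that can fail the conclusion are composite orders $v<v_0$, of which there are finitely many.

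This corollary is essentially a bookkeeping consequence of the two theorems, so I do not expect a genuine obstacle. The only point requiring care is the first step: the hypothesis supplied is ``without short orbits'' rather than $\gcd(v,k)=1$, so I cannot simply quote the standard CDF/design correspondence of \cite{bjl}, but must instead argue it directly from the fullness of the orbits via the difference count described above. Everything after that is a partition of the orders $v$ into the ranges $v\geq v_0$ (covered by Theorem~\ref{thm:main theorem}) and $v<v_0$ (with the primes covered by Theorem~\ref{resultprime}), leaving only the finitely many composite orders below $v_0$ as possible exceptions.
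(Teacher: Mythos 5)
Your proposal is correct and follows exactly the route the paper intends: the paper gives no explicit proof beyond the remark that the corollary follows by combining Theorem~\ref{resultprime} (primes of any size) with Theorem~\ref{thm:main theorem} (all $v\geq v_0$), leaving only finitely many composite orders below $v_0$. Your careful verification that a disjoint transversal of full orbits is automatically a $(v,k,1)$-CDF --- via counting translates containing $\{0,\delta\}$, rather than invoking the $\gcd(v,k)=1$ correspondence --- correctly fills in the one detail the paper leaves implicit.
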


\section{Preliminaries}

For any positive integer $c$, let $[c]:=\{1,\ldots,c\}$. We will make use of the following simple lemma which shows that, for large $v$ and fixed $k$ and $\l$, a cyclic $(v,k,\l)$-design has few short orbits.

\begin{Lemma}\label{L:fewShort}
Let $k \geq 2$ and $\lambda \geq 1$ be fixed integers. If $(V,\B)$ is a cyclic $(v,k,\l)$-design with $h$ short orbits and $m$ full orbits, then
\begin{itemize}[itemsep=1mm]
    \item[\textup{(i)}]
$h \leq 2\l\sqrt{k}$; and
    \item[\textup{(ii)}]
$\frac{\l(v-1)}{k(k-1)}-2\l\sqrt{k} \leq m \leq \frac{\l(v-1)}{k(k-1)} \leq m+h \leq \frac{\l(v-1)}{k(k-1)}+2\l\sqrt{k}$.
\end{itemize}
\end{Lemma}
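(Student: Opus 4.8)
The plan is to derive both parts from a single exact identity obtained by counting blocks orbit-by-orbit, with part~(i) being the real work and part~(ii) following almost formally. Write the short orbits as $O_1,\dots,O_h$ and let $d_i$ denote the order of the stabiliser in $\Z_v$ of a (equivalently, any) block of $O_i$, so that $O_i$ consists of exactly $v/d_i$ blocks and full orbits have stabiliser order $1$. Since the design has $\l v(v-1)/(k(k-1))$ blocks in total, counting all blocks by orbit and dividing by $v$ records the identity
\[
m+\sum_{i=1}^h\frac{1}{d_i}=\frac{\l(v-1)}{k(k-1)}.
\]
Because each block of a short orbit is fixed by its stabiliser, it is a union of cosets of that stabiliser, so $d_i\mid k$ and in particular $2\leq d_i\leq k$; hence $\sum_i 1/d_i$ lies between $0$ and $h/2$. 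Granting the bound $h\leq 2\l\sqrt{k}$ of part~(i), every inequality of part~(ii) then drops out of the displayed identity using $0\leq\sum_i 1/d_i\leq h\leq 2\l\sqrt{k}$.

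For part~(i), the key is a difference-counting identity. For nonzero $g\in\Z_v$ let $n_g(B)$ be the number of ordered pairs $(x,y)\in B\times B$ with $x-y=g$. Counting incidences between blocks and ordered pairs of a fixed difference $g$ shows $\sum_{B}n_g(B)=\l v$, and grouping the blocks into orbits (translation preserves $n_g$, and $O_i$ has $v/d_i$ distinct translates) turns this into $\sum_{\text{orbits}} n_g(B_O)/d_O=\l$, valid for every nonzero $g$. Now fix a divisor $d\geq 2$ of $k$; since $\Z_v$ is cyclic it has at most one subgroup $H$ of order $d$, and a short orbit can have stabiliser $H$ only if $d\mid\gcd(v,k)$. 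Choosing $g$ to be a generator of $H$, any orbit stabilised by $H$ has its base block invariant under addition of $g$, so $n_g(B_O)=k$ and such an orbit contributes $k/d$ to the sum $\sum n_g(B_O)/d_O=\l$. As all contributions are nonnegative, the number of short orbits with stabiliser exactly $H$ is at most $\l d/k\leq\l$.

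Summing this over the admissible stabiliser orders completes part~(i): the stabiliser of every short orbit is the order-$d$ subgroup for some divisor $d\geq 2$ of $k$, there are at most $\tau(k)-1$ such divisors, and the elementary bound $\tau(k)\leq 2\sqrt{k}$ gives $h\leq\l(\tau(k)-1)\leq 2\l\sqrt{k}$. I expect the main obstacle to be setting up the difference-counting identity cleanly and verifying that each short orbit contributes a full $k/d$ to it; once this is in place the reduction to the divisor bound $\tau(k)\leq 2\sqrt{k}$ is routine, and one only needs to confirm that the $2$-torsion case (a generator $g$ of order $2$) causes no trouble in the incidence count, which it does not.
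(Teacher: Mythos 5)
Your proof is correct and takes essentially the same route as the paper's: both hinge on the observations that a short orbit's base block is a union of cosets of its stabiliser (forcing the stabiliser order to divide $k$), that the $\lambda$-fold coverage of a difference generating that stabiliser limits each divisor of $k$ to at most $\lambda$ short orbits, together with the divisor bound $\sigma_0(k)\leq 2\sqrt{k}$ and the block-count identity $mv+\sum_i \ell_i = \lambda v(v-1)/(k(k-1))$ for part (ii). The only difference is cosmetic bookkeeping: where you derive the per-divisor bound from the orbit-weighted identity $\sum_{O} n_g(B_O)/d_O=\lambda$ with $g$ a generator of the stabiliser, the paper directly pigeonholes the exactly $\lambda$ blocks containing the pair $\{0,\ell_i\}$, each short orbit of cardinality $\ell_i$ supplying a distinct such block.
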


\begin{proof}
Let the point set of $(V,\B)$ be $\Z_v$ and let $\B_1,\ldots,\B_{h}$ be the short orbits of $(V,\B)$. Let $i \in [h]$. Recall that by the orbit-stabilizer theorem we have $|\B_i|=\ell_i$ where $\frac{v}{k} \leq \ell_i<v$ and $\ell_i \mid v$. Let $B_i$ be a base block from $\B_i$ such that $B_i$ contains the point $0$. Since $|\B_i|=\ell_i$, $B_i+\ell_i=B_i$. It follows that $B_i$ contains all multiples of $\ell_i$. Write $S_i:=\{0,\ell_i,2\ell_i,\ldots,(\frac{v}{\ell_i}-1)\ell_i\}$. Then $S_i\subseteq B_i$. Furthermore, for any $a\in B_i$, $a+S_i\subseteq B_i$, and so $B_i$ is a disjoint union of some cosets of $S_i$ in $\mathbb Z_v$, which implies $|S_i|\mid |B_i|$. That is, $\frac{v}{\ell_i}\mid k$. Also, because exactly $\l$ blocks in $\B$ contain the pair $\{0,\ell_i\}$, we have that at most $\lambda$ of the orbits $\B_1,\ldots,\B_h$ have cardinality $\ell_i$.

Thus, $h \leq \lambda \sigma_0(k)$ where $\sigma_0(k)$ denotes the number of divisors of $k$. We know that $\sigma_0(k) \leq 2\sqrt{k}$ for any positive integer $k$ by using the fact that $d\mid k$ if and only if $\frac{k}{d}\mid k$, and so (i) follows. Then (ii) follows from (i) by routine calculation after observing that $mv+\sum_{i=1}^h\ell_i=|\B|=\frac{\l v(v-1)}{k(k-1)}$.
\end{proof}

An {\em $r$-uniform hypergraph} $G$ is a pair $(V, E)$ where $V$ is a vertex set and $E$ is a set of $r$-subsets of $V$ called edges. The {\em degree} $\deg_G(x)$ of a vertex $x\in V$ is the number of edges of $G$ containing $x$. For distinct vertices $x$ and $y$ of $G$, the {\em codegree} $\codeg_G(x, y)$ is the number of edges of $G$ containing both $x$ and $y$. We write $\delta_G:=\min\limits_{x\in V} \deg_{G}(x)$, $\Delta_G:=\max\limits_{x\in V} \deg_{G}(x)$ and $\Delta^c_G:=\max\limits_{x,y\in V,x\neq y} \codeg_G(x,y)$.

A {\em proper edge-colouring} of a hypergraph $G=(V,E)$ with $c$ colours is a function $f: E\longrightarrow [c]$ such that no two edges that share a vertex get the same colour. The following powerful result of Pippenger and Spencer \cite{PipSpe} (based on the R\"{o}dl nibble) shows that every almost regular $r$-uniform hypergraph $G$ with small maximum codegree can be edge-coloured with close to $\Delta_G$ colours.

\begin{Lemma}\label{thm:pip_sp} \textup{\cite{PipSpe}}
Let $r\geq 2$ be an integer. For each real number $\eta>0$, there exists a real number $\eta^*>0$ and an integer $n_0$ such that if $G$ is an $r$-uniform hypergraph on $n\geq n_0$ vertices satisfying $\delta_G\geq (1-\eta^*)\Delta_G$ and $\Delta^c_G \leq \eta^* \Delta_G$, then $G$ has a proper edge-colouring with $(1+\eta)\Delta_G$ colours.
\end{Lemma}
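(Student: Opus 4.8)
The plan is to prove this by the semi-random method (the Rödl nibble), the standard route to results of this kind. First I would reformulate the goal: a proper edge-colouring of $G$ with $c$ colours is precisely a partition of the edge set into $c$ \emph{matchings}, where a matching is a set of pairwise vertex-disjoint edges. Writing $\Delta:=\Delta_G$ and $r$ for the uniformity, I fix once and for all a palette of $C:=\lceil(1+\eta/2)\Delta\rceil$ colours and never introduce new ones during the main phase; the aim of that phase is to properly colour almost all edges using only these $C$ colours, leaving a residual hypergraph of maximum degree at most $\eta''\Delta$ (for a small constant $\eta''$ to be fixed), which is then finished greedily with a handful of fresh colours. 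Throughout I maintain, for each still-uncoloured edge $e$, its list $L(e)\subseteq[C]$ of colours not yet used on any edge meeting $e$.

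Second, I analyse a single nibble round on the current uncoloured hypergraph $G'$, whose maximum degree I call $D$ and which I assume inductively to be near-regular, to have $\Delta^c_{G'}$ tiny compared with $D$, and to satisfy the slack invariant $|L(e)|\geq(1+\eta')D$ for every uncoloured $e$. In the round each uncoloured edge independently picks a uniformly random tentative colour from its own list, and I permanently colour $e$ with that colour exactly when no edge meeting $e$ picked the same colour. Since $e$ meets at most $r(D-1)$ other edges, each of which (conditioned on $e$'s colour) collides with it independently with probability at most $1/((1+\eta')D)$, the probability that $e$ is kept is at least $(1-\tfrac{1}{(1+\eta')D})^{r(D-1)}\to e^{-r/(1+\eta')}$, a positive constant $c_0=c_0(r,\eta')$. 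Hence at every vertex a constant fraction of the incident edges are coloured in expectation, so the expected residual degree is about $(1-c_0)D$. A parallel computation shows each colour is expected to leave $L(e)$ at the same rate, so the expected list size shrinks by essentially the same factor $(1-c_0)$; it is exactly here that the small-codegree hypothesis is used, to guarantee that the ``a single coloured edge deletes a colour from two lists at once'' correlations and the fluctuations in how often a colour is offered are all lower order, so that lists and degrees track each other and the slack invariant is preserved.

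Third, I would turn these expectations into high-probability statements by concentration. Exposing the edges' random choices one at a time gives a martingale in which altering one choice changes any given residual degree or list size by $O(1)$, so a bounded-differences (Azuma/Talagrand) inequality yields exponentially small failure probabilities; since $n\geq n_0$ is large and $G'$ is near-regular, $D$ is large, and a union bound over all vertices, pairs and edges shows that with positive probability the residual hypergraph is again near-regular, still has codegree tiny relative to its new maximum degree $D_{\text{new}}$, and still satisfies $|L(e)|\geq(1+\eta')D_{\text{new}}$. Iterating a \emph{constant} number $t=O_{r}(\log(1/\eta''))$ of such rounds drives the maximum degree below $\eta''\Delta$ while preserving all invariants. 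At that point a trivial greedy edge-colouring finishes the residual with at most $r\eta''\Delta$ fresh colours; choosing $\eta''=\eta/(2r)$ makes the grand total $C+r\eta''\Delta=(1+\eta)\Delta$ colours, as required. The thresholds $\eta^*$ and $n_0$ are read off from the constraints the concentration step imposes (how near-regular and how small-codegree $G$ must be for the invariants to survive $t$ rounds).

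The step I expect to be the real obstacle is the concentration-plus-bookkeeping that keeps the slack invariant alive round after round: one must verify not merely that degrees concentrate, but that list sizes shrink at \emph{the same} rate as degrees, so that the crucial ratio $|L(e)|/D$ never dips below $1+\eta'$. This rate-matching is precisely what the bound $\Delta^c_G\leq\eta^*\Delta_G$ buys, and getting the second-order collision terms, the martingale bounded-difference constants, and the per-round error budgets to fit together so that everything stays inside $(1+\eta)\Delta$ colours is the technical heart of the argument.
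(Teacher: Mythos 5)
First, note that the paper itself does not prove this lemma at all: it is quoted verbatim from Pippenger and Spencer \cite{PipSpe}, so your proposal must be judged against their argument. Your scaffolding (nibble rounds, martingale concentration, greedy finish on a low-degree residual) is the right genre, but the heart of your main phase contains a genuine gap. You fix a single palette of $C=\lceil(1+\eta/2)\Delta\rceil$ colours, maintain per-edge lists $L(e)$, and claim that ``each colour is expected to leave $L(e)$ at the same rate'' as degrees shrink, so that the slack $|L(e)|\geq(1+\eta')D$ survives every round. That rate-matching claim is wrong by a factor of $r$. In one round a vertex loses about $c_0D$ of its incident edges, so degrees shrink by the factor $1-c_0$; but the edge $e$ loses a colour whenever \emph{any} of its roughly $rD$ neighbouring edges is permanently coloured, and in the early rounds the colours awarded to neighbours of $e$ at its $r$ different vertices are essentially independent, hence essentially all distinct and essentially all still in $L(e)$. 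So $|L(e)|$ drops by roughly $rc_0D$ per round while $D$ drops by $c_0D$: lists shrink about $r$ times faster than degrees. This is a first-order effect, not one of the ``second-order collision terms and fluctuations'' you delegate to the codegree hypothesis; $\Delta^c_G\leq\eta^*\Delta_G$ cannot repair it. After a bounded number of rounds the invariant $|L(e)|\geq(1+\eta')D$ fails, and with it your lower bound on the keep probability and the whole induction. Rescuing a fixed-palette, list-based argument is possible --- that is precisely Kahn's asymptotic list-edge-colouring theorem --- but it needs entirely different invariants (absolute lower bounds on list sizes, self-correcting control of the quantities $T_{v,\gamma}$ counting uncoloured edges at $v$ whose list contains $\gamma$) and is considerably harder than the statement being proved.

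The actual Pippenger--Spencer proof sidesteps lists altogether, and this is its key idea: each nibble round uses a \emph{fresh} palette whose size is proportional to the current maximum degree times the nibble fraction. Within a round, activated edges pick uniformly from that fresh palette, conflicting edges are simply left uncoloured, so every colour class is a matching by construction; concentration (this is where the small-codegree and near-regularity hypotheses enter) shows that all vertex degrees drop by the same constant factor. Since the current degree decays geometrically over a constant number of rounds, the total number of colours used telescopes to $(1+o(1))\Delta_G$, and the low-degree residual is finished greedily exactly as you propose. If you replace your fixed palette and slack invariant by fresh, geometrically shrinking palettes, your per-round collision estimate, your concentration plan, and your final bookkeeping all become sound and essentially reproduce the original proof.
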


\section{Proof of Theorem \ref{thm:main theorem}}

A {\em partial parallel class} of a $(v,k,\lambda)$-design is a set of pairwise disjoint blocks. Let $(V,\B)$ be a cyclic $(v,k,\lambda)$-design with orbits $\B_1,\ldots,\B_t$, let $\P$ be a partial parallel class of $(V,\B)$ and let $s=\lfloor\frac{k-1}{\lambda}\rfloor$. For any nonnegative integer $a$ we define $T_a(\P)=\{i \in [t]: |\P \cap \B_i|=a\}$ to be the set of indices of orbits of $(V,\B)$ that contain exactly $a$ blocks in $\P$, and we define $\t_a(\P)=|T_a(\P)|$. Also, we say that a block $B \in \B$ is \emph{$\P$-good} if, for each $i \in [t]$, $B$ intersects at most one block in $\P \cap \B_i$ and, for each $i \in T_0(\P) \cup \cdots \cup T_{s-1}(\P)$, $B$ intersects no block in $\P \cap \B_i$. Blocks in $\B$ that are not $\P$-good are \emph{$\P$-bad}. Intuitively, a $\P$-good block $B$ has the property that if we add $B$ to $\P$ and remove all blocks of $\P$ incident with $B$, then each orbit that intersected $\P$ in at least $s-1$ blocks still intersects the resulting partial parallel class in at least $s-1$ blocks. Finally we define, if $s \geq 2$,
\[d(\P)=\medop\sum_{a=0}^{s-2}(s-1-a)\t_a(\P).\]
One can think of $d(\P)$ as a measure of how far $\P$ is from intersecting each orbit in at least $s-1$ blocks. The definitions of $\P$-good and $d(\P)$ are implicitly dependent on the value of $s=\lfloor\frac{k-1}{\lambda}\rfloor$.

Our strategy is to first, in Lemma~\ref{L:applyPipSpe} below, apply Lemma~\ref{thm:pip_sp} to an auxiliary hypergraph in order to obtain a partial parallel class in the design that contains $s$ blocks from almost every orbit. For such a partial parallel class $\P$ we then, in Lemma~\ref{L:greedy}, prove that if each orbit that intersects $\P$ in fewer than $s-1$ blocks contains sufficiently many $\P$-good blocks, then $\P$ can be modified to produce a new class that contains $s$ blocks from almost every orbit and $s-1$ blocks from each remaining orbit. Finally, to prove Theorem~\ref{thm:asympt}, we show that Lemma~\ref{L:greedy} can successfully be applied to a partial parallel class obtained by making some modifications to a class given by Lemma~\ref{L:applyPipSpe}.

\begin{Lemma}\label{L:applyPipSpe}
Let $k$ and $\l$ be positive integers and let $s=\lfloor\frac{k-1}{\l}\rfloor$. For each real number $\e^*>0$, there exists an integer $v^*_0$ such that, for each integer $v \geq v^*_0$, any cyclic $(v,k,\l)$-design with $m$ full orbits has a partial parallel class $\P$ that contains $s$ blocks from each of $(1-\e^*)m$ full orbits and no blocks from any other orbit.
\end{Lemma}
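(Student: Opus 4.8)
The plan is to apply Lemma~\ref{thm:pip_sp} to an auxiliary hypergraph $G$ whose near-perfect matchings correspond exactly to the partial parallel classes we seek (we may assume $s\geq 1$, the case $s=0$ being trivial). Write the full orbits as $\B_1,\dots,\B_m$, fix a base block in each, and recall that each point of $\Z_v$ lies in exactly $k$ blocks of a given full orbit. Call a \emph{bundle} of $\B_i$ any union $C_1\cup\dots\cup C_s$ of $s$ pairwise disjoint blocks of $\B_i$; it is a $ks$-subset of $\Z_v$. I would take the vertex set of $G$ to be $\Z_v\cup\{o_1,\dots,o_m\}$, where $o_1,\dots,o_m$ are new \emph{orbit vertices}, and for each bundle $C_1\cup\dots\cup C_s$ of $\B_i$ put into $G$ the edge $\{o_i\}\cup C_1\cup\dots\cup C_s$. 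Then $G$ is $(ks+1)$-uniform, and since the orbit vertices are pairwise nonadjacent, any proper edge-colouring of $G$ has the crucial property that each colour class is a set of pairwise disjoint bundles using each orbit vertex at most once; that is, each colour class is a partial parallel class containing exactly $s$ blocks from every orbit it meets and none from any other orbit. Thus a matching in $G$ covering almost all orbit vertices is precisely the object required.

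The key computation is that $G$ is nearly regular with small maximum codegree. Using $|\B_i|=v$ and the fact that $s$ is fixed, every orbit vertex has degree $\sim\binom{v}{s}\sim v^s/s!$, while every point has degree $\sim mk\binom{v-1}{s-1}\sim\frac{\l s}{k-1}\cdot\frac{v^s}{s!}$, where $m\sim\frac{\l v}{k(k-1)}$ by Lemma~\ref{L:fewShort} and the pairwise-disjointness constraint affects only lower-order terms. Since $s=\floor{\frac{k-1}{\l}}$ we have $\frac{\l s}{k-1}\leq 1$, with equality exactly when $\l\mid k-1$. Every codegree is $O(v^{s-1})=o(v^s)$: two points lie in only $O(v^{s-1})$ common bundles (the dominant term coming from the at most $\l$ blocks through both, completed to a bundle in $\sim v^{s-1}/(s-1)!$ ways), an orbit vertex and a point share $O(v^{s-1})$ bundles, and two orbit vertices share none. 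The main obstacle is the degree imbalance by the constant factor $\frac{\l s}{k-1}$ when $\l\nmid k-1$, since Lemma~\ref{thm:pip_sp} requires $\delta_G\geq(1-\eta^*)\Delta_G$. I would remove it by padding: add to $G$ a nearly regular, low-codegree $(ks+1)$-uniform hypergraph supported on $\Z_v$ alone that raises every point's degree to the common value $\sim v^s/s!$. This leaves the orbit-vertex degrees and all codegrees unchanged up to $o(v^s)$, and since the padding edges avoid the orbit vertices entirely they can simply be discarded afterwards. (When $\l\mid k-1$ no padding is needed.)

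With $G$ nearly regular and $\Delta^c_G=o(\Delta_G)$, Lemma~\ref{thm:pip_sp} supplies, for any $\eta>0$ and all large $v$, a proper edge-colouring of $G$ with $(1+\eta)\Delta_G$ colours. Taking a largest colour class yields a matching $M$ with at least $|E(G)|/\bigl((1+\eta)\Delta_G\bigr)$ edges; since $G$ is $(ks+1)$-uniform and nearly regular on $n=v+m$ vertices, $M$ covers all but an $O(\eta)$-fraction of these $n$ vertices, and in particular leaves at most an $O(\eta)$-fraction of the $n$ vertices, hence at most $O(\eta)n$ orbit vertices, uncovered. Because $n=v+m=\Theta(m)$, this is at most $\e^* m$ once $\eta$ is chosen small enough, so $M$ covers at least $(1-\e^*)m$ orbit vertices. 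Discarding the padding edges, the real bundles of $M$ are pairwise disjoint, contribute exactly $s$ blocks from each orbit they meet and none from the others, and meet at least $(1-\e^*)m$ orbits; their union is the desired partial parallel class, and taking $v_0^*$ large enough to absorb the thresholds coming from Lemma~\ref{thm:pip_sp} completes the argument.
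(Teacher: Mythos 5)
Your proposal is correct in substance and shares the paper's skeleton --- attach auxiliary vertices representing orbits, apply Lemma~\ref{thm:pip_sp}, and take a largest colour class --- but your auxiliary hypergraph is genuinely different from the paper's. The paper keeps the hypergraph $(k+1)$-uniform: its edges are single blocks $B\in\B_i$ joined to one of $s_i$ copies $u_{i,1},\dots,u_{i,s_i}$ of the $i$-th orbit vertex, where the multiplicities $s_i\in\{s,s+1\}$ are chosen so that $s_1+\cdots+s_m=\floor{\frac{v-1}{k}}$. This choice makes every point of $\Z_v$ have degree $k\floor{\frac{v-1}{k}}\in[v-k,v-1]$ while every copy vertex has degree exactly $v$, so near-regularity holds essentially \emph{exactly}, and all codegrees are at most $k+\l-1$; no asymptotic counting and no auxiliary construction is needed. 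An index $i$ for which at least $s$ of the copies $u_{i,j}$ are covered by the colour class then yields $s$ disjoint blocks of $\B_i$. Your $(ks+1)$-uniform ``bundle'' hypergraph instead builds the number $s$ into the edges themselves, which makes the extraction of the partial parallel class more immediate, but at the cost of degrees of order $v^s$, codegrees of order $v^{s-1}$, and --- the real issue --- a degree imbalance by the constant factor $\frac{\l s}{k-1}$ whenever $\l\nmid k-1$, which you must repair by padding.

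That padding step is the one place where your argument is incomplete: you assert, but do not construct, a $(ks+1)$-uniform hypergraph on $\Z_v$ that is nearly regular of degree $\sim\bigl(1-\frac{\l s}{k-1}\bigr)\frac{v^s}{s!}$ with all codegrees $o(v^s)$. The claim is true and the gap is fillable by a routine probabilistic argument: include each $(ks+1)$-subset of $\Z_v$ independently with probability $p=\bigl(1-\frac{\l s}{k-1}\bigr)\frac{v^s}{s!}\big/\binom{v-1}{ks}$; Chernoff bounds with a union bound over the $v$ vertices and $\binom{v}{2}$ pairs show that with high probability all degrees are $(1+o(1))\bigl(1-\frac{\l s}{k-1}\bigr)\frac{v^s}{s!}$ and all codegrees are $O(v^{s-1}\log v)=o(v^s)$, so such a hypergraph exists for all large $v$. (A second, smaller omission: two distinct bundles of the same orbit can have the same union, so as a simple hypergraph your edge set loses some multiplicity; this costs only $O(v^{s-1})$ edges per orbit, since a second decomposition of a union forces two blocks of the bundle to meet a common third block of the orbit, so all your $(1+o(1))$ estimates remain valid.) With these points supplied, your proof goes through; the paper's multiplicity trick is precisely what lets it avoid both issues entirely.
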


\begin{proof}
Let $(V,\B)$ be a cyclic $(v,k,\l)$-design with full orbits $\B_1,\ldots,\B_m$. Observe that, by Lemma~\ref{L:fewShort}(ii), $\frac{\l(v-1)}{k(k-1)}-2\l\sqrt{k} \leq m \leq \frac{\l(v-1)}{k(k-1)}$. Hence, supposing $v$ is sufficiently large, we have $\frac{\l(v-1)}{k^2} < m \leq \frac{\l(v-1)}{k(k-1)}$. Let $w=\lfloor\frac{v-1}{k}\rfloor$ and choose integers $s_1,\ldots,s_m \in \{s,s+1\}$ such that $s_1+\cdots+s_m =w$. Such integers exist because $sm \leq \frac{v-1}{k}$ using $s \leq \frac{k-1}{\l}$ and $m \leq \frac{\l(v-1)}{k(k-1)}$, and because $(s+1)m > \frac{v-1}{k}$ using $s+1 \geq \frac{k}{\l}$ and $m > \frac{\l(v-1)}{k^2}$. Let $W=\{u_{i,j}: i \in [m],j \in [s_i]\}$ be a set of $w$ vertices disjoint from $V$. We form a $(k+1)$-uniform hypergraph $G$ with vertex set $V \cup W$ and edge set
$$\{B \cup \{u_{i,j}\}: B \in \B_i, i \in [m], j \in [s_i]\}.$$

Observe that, for each $x \in V$, we have $\deg_G(x)=ks_1+\cdots+ks_m=kw$ because $x$ is in $k$ blocks in each full orbit, and hence we have  $v-k \leq \deg_G(x) \leq v-1$. Also, $\deg_G(x)=v$ for each $x \in W$ because each full orbit contains $v$ blocks. Furthermore $\codeg_G(x,y) \leq \lambda(s+1) \leq k+\l-1$ for all distinct $x,y \in V$ because $(V,\B)$ is a design of index $\lambda$, $\codeg_G(x,y)=0$ for all distinct $x,y \in W$, and $\codeg_G(x,y) = k$ for all $x \in V$ and $y \in W$ because $k$ blocks from any full orbit contain a given vertex in $V$. So $G$ has $v+w$ vertices, $vw$ edges, $\delta_G \geq v-k$, $\Delta_G \leq v$, and $\Delta^c_G \leq k+\l-1$. Thus Lemma \ref{thm:pip_sp} implies that for any real number $\e^* > 0$, supposing $v$ is sufficiently large, $G$ has a proper edge-colouring with $(1+\frac{\e^*}{s+1})v$ colours.

Let $\mathcal{C}$ be a largest colour class of this colouring. Then $\mathcal{C}$ is a set of disjoint edges of $G$ and, because $G$ has $vw$ edges, $|\mathcal{C}| \geq \frac{(s+1)w}{s+1+\e^*}>w-\e^*\frac{w}{s+1}>w-\e^* m$ where the last inequality follows because $w < (s+1)m$. Let
\[M=\bigl\{i \in [m]: |\{j \in [s_i]: \text{$u_{i,j}$ is in an edge in $\mathcal{C}$}\}| \geq s\bigr\}.\]
Observe that $|M| > (1-\e^*)m$ because each edge of $G$ contains exactly one vertex in $W$ and hence there are less than $\e^* m$ vertices in $W$ that are not in an edge of $\mathcal{C}$. Let $\mathcal{C}'$ be the set of edges in $\mathcal{C}$ that contain a vertex in $\{u_{i,j}:i \in M, j \in [s_i]\}$ and let $\P=\{E \cap V: E \in \mathcal{C}'\}$. Then, by the definitions of $G$ and $\mathcal{C}'$, $\P$ is a partial parallel class in $(V,\B)$ that contains at least $s$ blocks from $\B_i$ for each $i \in M$ and no other blocks. So, by deleting some blocks from $\P$ if necessary, we can obtain a partial parallel class with the desired properties.
\end{proof}

\begin{Lemma}\label{L:greedy}
Let $k$ and $\l$ be positive integers such that $k \geq 2\l+1$ and let $s=\lfloor\frac{k-1}{\l}\rfloor$. Let $(V,\B)$ be a cyclic \hbox{$(v,k,\l)$}-design with orbits $\B_1,\ldots,\B_t$, and let $\P'$ be a partial parallel class that contains at most $s$ blocks from each orbit. If, for each $i \in T_0(\P') \cup \cdots \cup T_{s-2}(\P')$, $\B_i$ contains more than $k^2(ks-k+1)(d(\P')-1)$ $\P'$-good blocks, then there is a partial parallel class $\P''$ of $(V,\B)$ such that $\t_{s-1}(\P'') \leq (k+1)d(\P')+\t_{s-1}(\P')$ and $\t_s(\P'') = t-\t_{s-1}(\P'')$.
\end{Lemma}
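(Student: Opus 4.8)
The plan is to build $\P''$ from $\P'$ by a sequence of augmentation steps, each adding one block to a currently deficient orbit while keeping the collection a partial parallel class; since $k\geq 2\l+1$ forces $s\geq 2$, the quantity $d(\cdot)$ is well defined. A single step is: given the current class $\P$, pick an orbit $\B_i$ with $i\in T_0(\P)\cup\cdots\cup T_{s-2}(\P)$, choose a $\P$-good block $B\in\B_i$, and replace $\P$ by $(\P\cup\{B\})\setminus\{X\in\P:X\cap B\neq\emptyset\}$. Because $B$ is $\P$-good, every removed block lies in an orbit of $T_s(\P)$ (good blocks meet no block of an orbit in $T_0(\P)\cup\cdots\cup T_{s-1}(\P)$), at most one per orbit, and there are at most $k$ of them since each of the $k$ points of $B$ lies in at most one block of the partial parallel class $\P$. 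Hence the step leaves a partial parallel class; it moves $\B_i$ up exactly one level (from $T_a$ to $T_{a+1}$ with $a\leq s-2$, as $B$ meets no block of the deficient orbit $\B_i$), and it pushes at most $k$ orbits from $T_s$ down to $T_{s-1}$. Consequently $d$ decreases by exactly $1$ per step, a deficient orbit stays deficient until it is itself augmented (the good blocks we add never meet it), and no orbit ever holds more than $s$ blocks. After exactly $d(\P')$ steps we therefore reach a class $\P''$ with $d(\P'')=0$, i.e. $\t_{s-1}(\P'')+\t_s(\P'')=t$, which gives $\t_s(\P'')=t-\t_{s-1}(\P'')$.

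Controlling $\t_{s-1}$ is the easy half. At a single step the augmented orbit adds at most one index to $T_{s-1}$ (only when it passes from $T_{s-2}$ to $T_{s-1}$), the at-most-$k$ displaced orbits add at most $k$, and no orbit ever leaves $T_{s-1}$, since orbits in $T_{s-1}$ are neither augmented (they are not deficient) nor met by any added good block. Thus $\t_{s-1}$ rises by at most $k+1$ per step, and over the $d(\P')$ steps we obtain $\t_{s-1}(\P'')\leq\t_{s-1}(\P')+(k+1)d(\P')$, as required.

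The main obstacle is guaranteeing that a $\P$-good block is always available in the orbit we wish to augment, and this is exactly what the hypothesis on the count of $\P'$-good blocks is tuned for. The idea is to keep measuring goodness against the \emph{fixed} original class $\P'$ and to bound how many $\P'$-good blocks of $\B_i$ can be ``spoiled''. I would first prove, by a short case analysis, that if $B\in\B_i$ is $\P'$-good but fails to be $\P$-good, then either (I) $B$ meets some block added so far, or (II) $B$ meets a surviving block of an orbit that has dropped from $T_s(\P')$ into $T_{s-1}(\P)$: a failure of the ``at most one block per orbit'' condition forces $B$ to meet an added block, while a failure of the ``no block in a low orbit'' condition forces $B$ either to meet an added block or to meet an old block of some orbit now lying in $T_0(\P)\cup\cdots\cup T_{s-1}(\P)$, and $\P'$-goodness rules out every case for that orbit except its fall out of $T_s(\P')$. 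Both spoiling types are charged to the added blocks: a single added block meets at most $k^2$ blocks of $\B_i$ (its $k$ points each lie in at most $k$ blocks of $\B_i$), and it drops at most $k$ orbits into $T_{s-1}$, each leaving $s-1$ surviving blocks which together meet at most $(s-1)k^2$ blocks of $\B_i$; hence each added block spoils at most $k^2+k(s-1)k^2=k^2(ks-k+1)$ of the $\P'$-good blocks of $\B_i$.

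Putting the pieces together finishes the argument: when we augment $\B_i$ we have made at most $d(\P')-1$ earlier additions, so at most $k^2(ks-k+1)(d(\P')-1)$ of the $\P'$-good blocks of $\B_i$ have been spoiled, and since by hypothesis $\B_i$ has strictly more $\P'$-good blocks than this, some $\P'$-good block of $\B_i$ is unspoiled and hence $\P$-good, so the step can be carried out. The delicate point, and the step I expect to demand the most care, is precisely the case analysis pinning (I) and (II) as the only ways goodness can be lost, together with the bookkeeping that charges each added block at most $k^2(ks-k+1)$ spoils; the remaining estimates are routine.
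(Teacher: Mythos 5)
Your proof is correct and is essentially the paper's own argument: the same augmentation step (add a $\P$-good block from a deficient orbit and delete the at most $k$ blocks it meets, each lying in a distinct orbit of $T_s$), the same per-step spoiling bound $k^2(ks-k+1)$ obtained by charging the added block plus the $s-1$ surviving blocks of each of the at most $k$ dropped orbits, and the same $(k+1)$-per-step bookkeeping for $\t_{s-1}$. The only difference is organizational: the paper phrases the process as an induction on $d(\P')$, re-measuring goodness against the intermediate class $\P^*$ at each step, whereas you unroll that induction into an explicit iteration that keeps goodness measured against the fixed original class $\P'$ and tracks cumulative spoiling --- the two formulations are equivalent.
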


\begin{proof}
Note that $s \geq 2$ by our hypotheses. We prove the result by induction on the quantity $d(\P')$.
If $d(\P')=0$ then $\t_0(\P')=\cdots=\t_{s-2}(\P')=0$ and we can take $\P''=\P'$ to complete the proof. So suppose that $d(\P')=\ell$ for some positive integer $\ell$ and that the result holds for $d(\P') < \ell$. Let $j \in T_0(\P') \cup \cdots \cup T_{s-2}(\P')$ and let $B_j$ be a $\P'$-good block in $\B_j$ (such a block exists by the hypotheses of the lemma because $d(\P') \geq 1$). Let $\Q$ be the set of blocks in $\P'$ which intersect $B_j$ and let $\P^*=(\P' \cup \{B_j\}) \setminus \Q$. Note that $\P^*$ is a partial parallel class of $(V,\B)$ and that $|\Q| \leq |B_j|=k$. Also, because $B_j$ was $\P'$-good, $\t_1(\Q)=|\Q|$ and $T_1(\Q) \subseteq T_s(\P')$.

Observe that $|\P^* \cap \B_j|=|\P' \cap \B_j|+1$, $|\P^* \cap \B_i|=s-1$ for each $i \in T_1(\Q)$, and $\P^* \cap \B_i=\P' \cap \B_i$ for all $i \in [t] \setminus (T_1(\Q) \cup \{j\})$.
Thus $d(\P^*)=d(\P')-1$ and $\t_{s-1}(\P^*) \leq \t_{s-1}(\P')+k+1$.
Any block in $\B$ that was $\P'$-good but is $\P^*$-bad must intersect one of the at most $ks-k+1$ blocks in  $\{B_j\} \cup \bigcup_{i\in T_1(\Q)}(\P^*\cap\B_i)$. For each $i \in T_0(\P^*) \cup \cdots \cup T_{s-2}(\P^*)$, at most $k^2$ blocks in $\B_i$ intersect each of these blocks and so, because more than $k^2(ks-k+1)(d(\P')-1)$ blocks in $\B_i$ were $\P'$-good, more than
$$k^2(ks-k+1)(d(\P')-1)-k^2(ks-k+1)=k^2(ks-k+1)(d(\P')-2)=k^2(ks-k+1)(d(\P^*)-1)$$
blocks in $\B_i$ are $\P^*$-good. Thus we can apply our inductive hypothesis to $\P^*$ to establish the existence of a partial parallel class $\P''$ of $(V,\B)$ such that
$\t_{s-1}(\P'') \leq (k+1)d(\P^*)+\t_{s-1}(\P^*)$
and $\t_s(\P'') =t-\t_{s-1}(\P'')$. The proof is now complete by observing that
$$\t_{s-1}(\P'') \leq (k+1)d(\P^*)+\t_{s-1}(\P^*) \leq (k+1)(d(\P')-1)+\t_{s-1}(\P') + k+1=(k+1)d(\P')+\t_{s-1}(\P').$$ \qedhere
\end{proof}

\begin{Theorem}\label{thm:asympt}
Let $k$ and $\l$ be fixed positive integers such that $k \geq 2\l+1$ and let $s=\lfloor\frac{k-1}{\l}\rfloor$. For each real number $\epsilon>0$, there is an integer $v_0$ such that, for each integer $v \geq v_0$, any cyclic $(v,k,\l)$-design with $t$ orbits has a partial parallel class that contains $s-1$ blocks from each of at most $\e t$ orbits and contains $s$ blocks from each other orbit.
\end{Theorem}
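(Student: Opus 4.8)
The plan is to deduce Theorem~\ref{thm:asympt} by combining Lemmas~\ref{L:fewShort}, \ref{L:applyPipSpe} and~\ref{L:greedy}. Fix $\e>0$. First I would pick a small constant $\e^*>0$, depending only on $\e$, $k$ and $\l$, and apply Lemma~\ref{L:applyPipSpe} to obtain, for all sufficiently large $v$, a partial parallel class $\P_0$ containing $s$ blocks from each of $(1-\e^*)m$ full orbits and no block from any other orbit. By Lemma~\ref{L:fewShort} we have $h\leq 2\l\sqrt{k}$ and $m=(1+o(1))\frac{\l v}{k(k-1)}$, so the orbits missed by $\P_0$ — the at most $\e^*m$ leftover full orbits together with the $h$ short orbits — number at most $\e^*m+2\l\sqrt{k}$. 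The target is a class $\P'$ with at most $s$ blocks per orbit and with $d(\P')$ small, so that Lemma~\ref{L:greedy} applies: it returns $\P''$ with $\t_{s-1}(\P'')\leq(k+1)d(\P')+\t_{s-1}(\P')$ and $\t_s(\P'')=t-\t_{s-1}(\P'')$, and bounding both $d(\P')$ and $\t_{s-1}(\P')$ by $\frac{\e t}{2(k+1)}$ would yield $\t_{s-1}(\P'')\leq\e t$, which is exactly the desired conclusion.

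To build $\P'$ I would use the reservoir of uncovered points. Since $sm\leq\frac{v-1}{k}$, the class $\P_0$ covers at most $(1-\e^*)(v-1)$ points, leaving at least $\e^*(v-1)$ points uncovered. For each orbit missed by $\P_0$ I would greedily adjoin up to $s-1$ pairwise disjoint blocks taken from the currently uncovered region, moving the orbit into $T_{s-1}(\P')$ so that it no longer contributes to $d(\P')$. The space needed is at most $(s-1)k(\e^*m+h)\leq\frac{s-1}{s}\e^*(v-1)+O(1)$, which sits comfortably inside the reservoir; the $h=O(1)$ short orbits (each of which does admit $s$ pairwise disjoint blocks) I would fill first, while the class is still sparse. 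If every missed orbit reached $s-1$ blocks then $d(\P')=0$ and $\t_{s-1}(\P')\leq\e^*m+h$, so a small choice of $\e^*$ would already finish the argument; the role of Lemma~\ref{L:greedy} is to absorb the bounded number of orbits at which the greedy step stalls, while keeping $d(\P')$ small.

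The crux, and the step I expect to be the main obstacle, is certifying the good-block hypothesis of Lemma~\ref{L:greedy} (equivalently, showing the greedy additions rarely stall). Because $\P'$ covers almost all of $\Z_v$, a typical block of a missed orbit $\B_i$ meets many blocks of $\P'$, and the obvious union bound over point pairs only gives $O(v)$ bad blocks in $\B_i$ — useless against the threshold $k^2(ks-k+1)(d(\P')-1)$. What rescues the estimate is that $\B_i$ is a single orbit: writing $\mu_i(\delta)$ for the number of representations of a nonzero $\delta$ as a difference of two points of a base block of $\B_i$, a block of $\B_i$ can be bad only if it contains a point pair whose difference $\delta$ has $\mu_i(\delta)>0$, while $\sum_{\delta}\mu_i(\delta)=k(k-1)$ and the index-$\l$ condition forces $\sum_i\mu_i(\delta)\leq\l$ for every nonzero $\delta$. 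Summing the bad-block bound over all orbits then telescopes: $\sum_i(\text{bad blocks in }\B_i)\leq\l\sum_{\delta\neq 0}f(\delta)\leq\l k(s-1)v=O(\l k s\,v)$, where $f(\delta)$ counts point pairs of difference $\delta$ lying in distinct blocks of a common orbit of $\P'$. Hence the average orbit has only $O(k^3 s)$ bad blocks, so a Markov-type argument shows that all but a tiny fraction of orbits have $v-O(1)$ good blocks. The delicate remaining task is to guarantee that every orbit left deficient for Lemma~\ref{L:greedy} is one of these abundant orbits: the few "heavy" orbits, namely those whose difference set meets a $\delta$ with $f(\delta)$ large, must be filled directly from the reservoir before $\P'$ becomes dense — and since each such $\delta$ lies in at most $\l$ difference sets, heavy orbits are genuinely rare. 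Controlling these heavy orbits, and thereby verifying the good-block hypothesis uniformly, is where I expect the real work to lie.
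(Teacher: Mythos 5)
Your skeleton matches the paper's (apply Lemma~\ref{L:applyPipSpe}, massage the resulting class into one satisfying the hypothesis of Lemma~\ref{L:greedy}, then apply that lemma), and you correctly identify the crux: verifying the good-block hypothesis and controlling the ``heavy'' orbits. But there is a genuine gap, and it sits at the step your whole construction leans on: adjoining to an already dense class some blocks of a \emph{prescribed} orbit taken entirely from the uncovered ``reservoir''. An orbit is the set of $v$ translates of a single base block $B_i$, so this requires the uncovered set $U$ (of size about $\e^* v$, whose structure you do not control --- it is whatever the Pippenger--Spencer colouring happens to leave over) to contain a translate of $B_i$. Cardinality counting cannot deliver this: each point of $\Z_v$ lies in exactly $k$ translates of $B_i$, so the union bound on the number of translates meeting the covered set is $k(1-\e^*)v\geq v$, i.e.\ vacuous; and indeed if $B_i$ is spread out (its points roughly $v/k$ apart) while $U$ is concentrated (say an interval of length $\e^* v$), then \emph{no} block of $\B_i$ lies inside $U$. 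Nothing bounds the number of orbits for which this happens --- it could be all $\Theta(\e^* t)$ missed orbits, and in particular the heavy orbits that your third paragraph proposes to ``fill directly from the reservoir''. So neither your paragraph-two greedy step nor your paragraph-three fix is justified, and the difficulty you flag at the end is real but unresolved.

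The paper's proof sidesteps exactly this point, and never looks for blocks inside the uncovered region. It defines the heavy set $R$ (orbits with at least $\tfrac{1}{2}st$ $\P$-bad blocks), shows by the same pair-counting you sketch that $|R|\leq k^2s\l=O(1)$, and fills these orbits with a greedy class $\R$ that is \emph{not} required to avoid $\P$ --- each new block need only avoid the $O(1)$ blocks of $\R$ already chosen, which is trivial since $|\B_i|\geq v/k$. The clash with $\P$ is then repaired afterwards by deleting from $\P$ all blocks of the $O(1)$ orbits $Q$ that meet $\R$, which costs only $O(1)$ in $d(\P')$. Note also that the resulting $\P'$ satisfies $\t_1(\P')=\cdots=\t_{s-1}(\P')=0$: every orbit contributes $s$ blocks or none. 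This is deliberate and is incompatible with your plan of moving orbits into $T_{s-1}(\P')$: by the definition of a good block, a $\P'$-good block must avoid \emph{all} blocks of every orbit in $T_0(\P')\cup\cdots\cup T_{s-1}(\P')$, so partially filled orbits make goodness strictly harder to achieve, and your difference-function count (which only captures the ``meets two blocks of one orbit'' type of badness) no longer bounds the number of bad blocks --- blocks merely touching a $T_{s-1}$ orbit's contribution are bad too, and there can be $\Theta(\e^* v)$ of these per orbit. In the paper, every deficient orbit of $\P'$ lies outside $R$, hence has fewer than $\tfrac{1}{2}st+O(1)$ $\P'$-bad blocks, and it is Lemma~\ref{L:greedy} --- not any reservoir argument --- that performs all of the filling of deficient orbits.
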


\begin{proof}
Note that $s \geq 2$ by our hypotheses. We may assume that $\e < \frac{1}{4k^2}$. Let $\e^*=\frac{\e}{2(k+1)s}$. Let $(V,\B)$ be a cyclic $(v,k,\l)$-design with orbits $\B_1, \ldots, \B_t$ and suppose that $m$ of these orbits are full. Throughout this proof, we will tacitly assume $v$ is sufficiently large whenever necessary and will use asymptotic notation with respect to this regime. Note that $t=\frac{\l(v-1)}{k(k-1)}+O(1)$ by Lemma~\ref{L:fewShort}(ii) and hence $t=\Theta(v)$. By Lemma~\ref{L:applyPipSpe} there is a partial parallel class $\P$ of $(V,\B)$ such that $T_0(\P)$ contains at most $\e^*m \leq \epsilon^* t$ indices of full orbits and every other index of a full orbit is in $T_s(\P)$. Let
\[R = \{i \in [t]: \text{$\B_i$ contains at least $\tfrac{1}{2}st$ $\P$-bad blocks}\}.\]

A block in $\B$ is $\P$-bad if and only if it intersects at least two blocks in $\P \cap \B_i$ for some $i \in T_s(\P)$. At most $k^2 \l\binom{s}{2}$ blocks of $\B$ intersect at least two blocks in $\P \cap \B_i$ for each $i \in T_s(\P)$, and so it follows that at most $k^2 \l\binom{s}{2}\t_s(\P)\leq k^2 \l\binom{s}{2}t$ blocks in $\B$ are $\P$-bad. Thus, by the definition of $R$, we have $|R| \leq k^2s\l$.

We can greedily choose a partial parallel class $\R$ in $(V,\B)$ such that $|\R \cap \B_i|=s$ for each $i \in R$ and $\R \cap \B_i=\emptyset$ for each $i \in [t] \setminus R$. To see this, suppose that $x<s|R|\leq k^2s^2\l$ blocks of the class have already been chosen and note that, for each $i \in R$, at most $k^2$ of blocks in $\B_i$ intersect each already chosen block and \[|\B_i|  \geq \tfrac{v}{k} \gg k^4s^2\l >k^2x.\]
Thus we can indeed choose a suitable $\R$ greedily.

Now let
\[Q = \{i \in [t] \setminus R: \text{some block in $\P \cap \B_i$ intersects some block in $\R$}\}.\]
Observe that $ks|R| \leq k^3s^2\l$ vertices in $V$ are in a block in $\R$ and hence $|Q| \leq k^3s^2\l$.

Let $\P'=\R \cup \bigcup_{i \in [t] \setminus Q}(\P \cap \B_i)$ and note that $\P'$ is a partial parallel class in $(V,\B)$. So $T_s(\P')=(T_s(\P) \cup R) \setminus Q$ and $T_0(\P')=[t] \setminus T_s(\P')$. Thus $\t_1(\P')=\cdots=\t_{s-1}(\P') = 0$ and $\t_0(\P') \leq  \t_0(\P)+|Q|$. Furthermore, $T_0(\P)$ contains at most $\epsilon^* t$ indices of full orbits and, by Lemma~\ref{L:fewShort}(i), at most $2\l\sqrt{k}$ indices of short orbits. From this it follows that
\begin{equation}\label{E:dPbound}
d(\P') = (s-1)\t_0(\P') \leq  (s-1)(\t_0(\P)+|Q|) <\mfrac{\e t}{2(k+1)}+O(1) \ll \mfrac{\e t}{k+1}.
\end{equation}

Any block in $\B$ that was $\P$-good but is $\P'$-bad must intersect two of the $s$ blocks in $\P' \cap \B_i$ for some $i \in R$. For each $i \in R$, at most $k^2 \l\binom{s}{2}$ blocks in $\B$ intersect two of the blocks in $\P' \cap \B_i$. So at most $k^2 \l\binom{s}{2}|R|\leq k^4s\l^2\binom{s}{2}$ blocks in $\B$ were $\P$-good but are $\P'$-bad. Thus, for each $i \in T_0(\P')$, because $i \notin R$ and hence less than $\frac{1}{2}st$ blocks in $\B_i$ were $\P$-bad, the number of $\P'$-bad blocks in $\B_i$ is less than $\tfrac{1}{2}st + k^4s\l^2\binom{s}{2}$. Now $t \leq \frac{\l(v-1)}{k(k-1)}+2\l\sqrt{k}$ by Lemma~\ref{L:fewShort}(ii) and hence $st \leq \frac{v}{k}+O(1)$. So, since $|\B_i|\geq \frac{v}{k}$, more than $\frac{v}{k}-\tfrac{1}{2}st - k^4s\l^2\binom{s}{2} \geq \frac{1}{2}st-O(1)$ blocks in $\B_i$ are $\P'$-good. Thus $\P'$ satisfies the conditions of Lemma~\ref{L:greedy} because
\[k^2(ks-k+1)(d(\P')-1) < \e k^2 s t < \tfrac{1}{4}st \ll \tfrac{1}{2}st-O(1)\]
where the first inequality follows by \eqref{E:dPbound} because $ks-k+1<s(k+1)$ and the second follows because $\e < \frac{1}{4k^2}$. Thus, by applying Lemma~\ref{L:greedy} to $\P'$, there is a partial parallel class $\P''$ of $(V,\B)$ such that $\t_s(\P'')=t-\t_{s-1}(\P'')$ and
\[\t_{s-1}(\P'') \leq (k+1)d(\P') < \e t\]
where the last inequality follows by \eqref{E:dPbound}.
\end{proof}

Note that in the special case where $\l$ divides $k-1$, the partial parallel class given by Theorem~\ref{thm:asympt} uses all but at most $\epsilon kt+1$ points of the design.

\vspace{0.3cm}
\noindent{\bf Proof of Theorem \ref{thm:main theorem}}\ \ This follows directly from Theorem \ref{thm:asympt}, noting that $s \geq 2$ because $k \geq 2\l+1$. \qed

\section{Concluding remarks}

A $(v,k,\lambda)$-DDF necessarily has $1\leq \lambda \leq k-1$ apart from the trivial case of a $(k,k,k)$-DDF (see \cite{burattik-1}). Theorem \ref{thm:main theorem} requires $1 \leq \l \leq (k-1)/2$. It is natural to ask whether it is possible to relax this condition. We make the following conjecture.

\begin{Conjecture}\label{con:any k any lambda}
Let $k$ and $\l$ be fixed positive integers such that $k \geq \lambda+1$. There exists an integer $v_0$ such that, for any cyclic $(v,k,\lambda)$-design with $v\geq v_0$, it is always possible to choose one block from each block orbit so that the chosen blocks are pairwise disjoint.
\end{Conjecture}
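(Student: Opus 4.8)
The plan is to establish Conjecture~\ref{con:any k any lambda} by pushing the machinery of Section~3 into the range $\l+1 \leq k \leq 2\l$, which is exactly the case $s=\lfloor\frac{k-1}{\l}\rfloor=1$ left uncovered by Theorem~\ref{thm:main theorem}. As in the proof of Theorem~\ref{thm:asympt}, the short orbits cause no trouble: by Lemma~\ref{L:fewShort}(i) there are only $O(1)$ of them, so one block can be chosen from each greedily and the points they use deleted, reducing to a statement about full orbits. Identifying each full orbit $\B_i$ with a base block $B_i$, the task becomes that of selecting a translate $B_i+t_i$ from each orbit so that the chosen translates are pairwise disjoint; equivalently, colouring every block of $\B$ by its orbit, we seek a \emph{full rainbow matching} in the block hypergraph, i.e.\ a set of pairwise disjoint blocks containing exactly one block of each colour. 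It is worth noting that even for prime $v$ the conjecture is open once $\l\geq2$, because the nonvanishing hypothesis $(md)!/(d!)^m\neq0$ of Lemma~\ref{pairwisedisjoint} exploited in Theorem~\ref{resultprime} fails as soon as $md\geq v$, and here $md\sim\frac{\l k}{2(k-1)}v\geq v$ for $\l\geq2$; thus the Combinatorial Nullstellensatz route is unavailable and a nibble-based approach seems forced.

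The essential new difficulty is that the correction step, Lemma~\ref{L:greedy}, is inert when $s=1$. Its mechanism is to insert a block from an under-represented orbit and pay for the resulting collisions by dropping each colliding orbit from $s$ blocks to $s-1$; when $s=1$ this drops an orbit to $0$ blocks, so it merely trades one uncovered orbit for up to $k$ new uncovered orbits, with no guarantee of termination. A global argument is therefore needed to cover the last $\e t$ orbits produced by Lemma~\ref{L:applyPipSpe} (applied with $s=1$), and the natural tool is the absorption method: before running the nibble I would reserve a small \emph{absorbing family} $\A$ of blocks, built so that for every sufficiently small set $S$ of leftover orbits there is a way to re-choose one block from each orbit met by $\A$ together with one block from each orbit of $S$, keeping all chosen blocks disjoint. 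One would then apply Lemma~\ref{L:applyPipSpe} to the orbits not reserved for $\A$ to obtain a near-full rainbow matching avoiding the points touched by $\A$, and finally invoke the absorbing property to swallow the at most $\e t$ uncovered orbits.

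The main obstacle is the construction of $\A$, and its difficulty depends sharply on $k$. Choosing one block from each of the $t\approx\frac{\l(v-1)}{k(k-1)}$ orbits uses about $tk\approx\frac{\l}{k-1}v$ points. For $\l+1<k\leq 2\l$ the fraction $\frac{\l}{k-1}$ is bounded away from $1$, so a positive proportion of the points stay free and a standard randomised absorber—or an iterative-absorption scheme in the spirit of the R\"odl nibble underlying Lemma~\ref{thm:pip_sp}—ought to be constructible, with the required degree and codegree bounds verified exactly as in Lemma~\ref{L:applyPipSpe}. The genuinely hard case is $k=\l+1$, where $\frac{\l}{k-1}=1$: then the sought configuration is a near-parallel class meeting every orbit exactly once and covering all but $o(v)$ points, so the packing is essentially perfect and there is almost no slack in which an absorber can act. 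I expect this near-perfect regime to demand either a careful exploitation of the cyclic symmetry—using the abundance of mutually interchangeable translates $B_i+t$ to manufacture flexibility that a general design lacks—or an appeal to recent results on full rainbow matchings and near-optimal hypergraph matchings (as behind \cite{EhaGloJoo}). Proving that a serviceable absorber exists, and that it can be made disjoint from the nibble's output, is where the bulk of the work, and the real risk, will lie.
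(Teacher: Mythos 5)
First, a point of orientation: the statement you are attempting is Conjecture~\ref{con:any k any lambda}, which the paper does \emph{not} prove --- it is posed in the concluding remarks as an open problem precisely because the machinery of Section~3 breaks down outside the range $k \geq 2\l+1$. So there is no paper proof to compare against; the only question is whether your proposal closes the gap on its own. It does not. Your diagnosis of the difficulty is accurate and matches the paper's implicit reasoning: the open range $\l+1 \leq k \leq 2\l$ is exactly $s=\lfloor\frac{k-1}{\l}\rfloor=1$, Lemma~\ref{L:greedy} is vacuous there (its repair mechanism demotes an orbit from $s$ to $s-1$ blocks, i.e.\ to zero, so collisions are traded rather than resolved), and the Nullstellensatz route of Theorem~\ref{resultprime} is blocked because $md \geq v$ once $\l \geq 2$. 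These observations are correct and worth having. But the constructive content of your argument --- the absorbing family $\A$, the proof that it absorbs every small leftover set of orbits, and the proof that it survives disjointly alongside the nibble's output --- is entirely deferred. You state yourself that this is ``where the bulk of the work, and the real risk, will lie.'' That is the whole theorem; what precedes it is a reduction of the conjecture to a harder-looking but standard-shaped open problem (a full rainbow matching in an almost-regular, bounded-codegree hypergraph), not a solution of it.

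The gap is most acute exactly where you flag it: $k=\l+1$. There the desired object meets every orbit exactly once and covers $(1-o(1))v$ points, so any absorber must operate with vanishing slack; standard randomised absorbers need a constant fraction of free vertices, and iterative absorption for near-perfect rainbow matchings requires structural hypotheses (e.g.\ bounded colour multiplicity per vertex pair, or spread conditions) that you have not verified hold for orbits of a cyclic $(v,k,\l)$-design --- note that here each colour class has size $v$ and each point lies in $k$ blocks of \emph{every} colour, a very different regime from the bounded colour classes in results like \cite{EhaGloJoo}. One further caution: your opening reduction (greedily fixing blocks for the $O(1)$ short orbits, deleting their points, then running Lemma~\ref{L:applyPipSpe} on what remains) is not literally valid as stated, since Lemma~\ref{L:applyPipSpe} is proved for a cyclic design, and deleting points destroys both the design property and the cyclic symmetry; in the tight case $k=\l+1$ even the $O(1)$ points so removed must be accounted for inside whatever replaces the nibble-plus-absorption argument, because there is no room to re-route around them. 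In short: correct problem analysis, plausible strategy, but the conjecture remains unproven by this proposal.
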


Compared with Conjecture \ref{conj:any k}, Conjecture \ref{con:any k any lambda} is stated for sufficiently large $v$. This is from the observation that the union of $\lambda$ copies of a $(k(k-1)+1,k,1)$-CDF forms a $(k(k-1)+1,k,\lambda)$-CDF which yields a cyclic $(k(k-1)+1,k,\lambda)$-design without short orbits. Note that a $(k(k-1)+1,k,1)$-CDF is often called a {\em cyclic difference set} (see \cite{baumert}) and it generates a symmetric design, any two blocks of which must intersect in one point. Thus the resulting cyclic $(k(k-1)+1,k,\lambda)$-design cannot be generated by a DDF.

Actually Nov\'{a}k made a stronger conjecture on cyclic STS$(v)$ than Conjecture~\ref{weak conjecture} in 1974. A $(v,3,1)$-DDF for $v\equiv 1\pmod{6}$ is called \emph{symmetric} if its base blocks can be chosen in such a way that for any nonzero $x$ of $\mathbb Z_v$, at most one of $x$ and its complement $v-x$ occurs in the base blocks and no base block contains zero.

\begin{Conjecture}\textup{(Nov\'{a}k, 1974) \cite{novak}}\label{strong conjecture}
Every cyclic STS$(v)$ with $v\equiv 1 \pmod{6}$ is generated by a symmetric $(v,3,1)$-DDF.
\end{Conjecture}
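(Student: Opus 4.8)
The plan is to reduce the symmetric condition to a clean disjointness statement and then attack that statement with the algebraic machinery already used for Theorem~\ref{resultprime}. Since $v\equiv 1\pmod 6$ forces $\gcd(v,3)=1$, every orbit of a cyclic STS$(v)$ is full, and its $m=(v-1)/6$ base blocks $B_1,\ldots,B_m$ form a $(v,3,1)$-CDF. A choice of translates $t_1,\ldots,t_m$ gives base blocks $P_i=B_i+t_i$ with union $U=\bigcup_{i=1}^m P_i$. The requirement that no base block contains $0$ is just $0\notin U$, and the requirement that at most one of $x$ and $-x$ occur for every nonzero $x$ is exactly $U\cap(-U)\subseteq\{0\}$. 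Because the $P_i$ are disjoint $3$-sets, $|U|=3m=(v-1)/2$, which is precisely the number of pairs $\{x,-x\}$ with $x\neq 0$; hence once $0\notin U$ and $U\cap(-U)=\emptyset$ hold, $U$ automatically meets each such pair in exactly one element. So Conjecture~\ref{strong conjecture} is equivalent to finding translates for which the $2m$ blocks
\[B_1+t_1,\ldots,B_m+t_m,\quad -B_1-t_1,\ldots,-B_m-t_m\]
are pairwise disjoint and avoid $0$.

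For a prime $v$ I would try to realise this through the Combinatorial Nullstellensatz, exactly in the spirit of the proof of Theorem~\ref{resultprime}. Expressing the constraints as polynomial conditions in $t_1,\ldots,t_m\in\Z_v$, disjointness of $B_i+t_i$ and $B_j+t_j$ forbids $t_i-t_j$ from a set of at most $9$ differences, the condition $(B_i+t_i)\cap(-B_j-t_j)=\emptyset$ forbids $t_i+t_j$ from a set of at most $9$ sums, and $0\notin B_i+t_i$ forbids $t_i$ from $3$ values. The product of the corresponding linear factors is a polynomial $F(t_1,\ldots,t_m)$, and it suffices to exhibit a monomial $\prod_i t_i^{d_i}$ with nonzero coefficient in $\mathbb{F}_v$ and each $d_i<v$; such a monomial yields a point of $\Z_v^m$ where $F\neq 0$, i.e.\ a valid system of translates. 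I would aim to compute the relevant coefficient in closed form, in the spirit of the multinomial quantity $(md)!/(d!)^m$ of Lemma~\ref{pairwisedisjoint}, exploiting the fact that the CDF structure makes the multiset of block differences cover $\Z_v\setminus\{0\}$ exactly once.

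The hard part is the additive ``sum'' constraints $t_i+t_j\notin -(B_i+B_j)$. Unlike the difference constraints governed by Lemma~\ref{pairwisedisjoint}, these are not translation invariant, and they couple the translate of $B_i$ to that of $-B_i$: the block $-B_i-t_i$ is forced to use $-t_i$ rather than a free representative, so Lemma~\ref{pairwisedisjoint} does not apply off the shelf and a fresh coefficient computation for a polynomial mixing difference and sum factors is required. Worse, a crude count gives each $t_i$ a degree of order $18m$ in $F$, whereas the grid $\Z_v$ has size $v\approx 6m$; the same obstruction appears probabilistically, since here the Lovász Local Lemma has $pd=\Theta(1)>1$, so no lossy method succeeds. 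I expect this to be the real content of the conjecture. I would try to overcome it by exploiting extra structure: treating first the natural special case of \emph{reversible} systems, where $-B_i$ lies in the orbit of $B_i$ and the mixed polynomial takes a more symmetric form, and using the exact difference cover of the CDF to force the mixed coefficient to be a nonzero multinomial-type expression. For composite $v$, where $\Z_v$ is not a field and the polynomial method fails outright, one would instead need a nibble-plus-absorption argument in the style of Theorem~\ref{thm:asympt}, reserving a small family of blocks to repair the sum constraints after an almost-perfect selection; making such absorption respect the $\pm$ symmetry is, I expect, the principal technical obstacle there.
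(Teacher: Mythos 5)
There is nothing in the paper to compare your attempt against: the statement is Conjecture~\ref{strong conjecture}, which the paper explicitly leaves \emph{open} --- the authors record only that it has been verified for $v\equiv 1\pmod{6}$, $v\leq 61$, and all of their actual theorems (Theorems~\ref{resultprime}, \ref{thm:main theorem} and \ref{thm:asympt}) concern the weaker, non-symmetric statement in which one only asks for pairwise disjoint orbit representatives. Your submission is, by its own account, a research program rather than a proof, so the honest verdict is that it does not establish the statement --- and no blind attempt of this length plausibly could, since the statement is a genuinely open conjecture.

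That said, let me assess what you did. Your reduction is correct: since $v$ is odd, $|U|=3m=(v-1)/2$ equals the number of pairs $\{x,-x\}$, and the symmetric condition is exactly that the $2m$ sets $B_i+t_i$ and $-(B_i+t_i)$ are pairwise disjoint (note the $i=j$ case of disjointness already forces $0\notin U$, since $0\in B_i+t_i$ iff $0\in-(B_i+t_i)$, so your separate ``avoid $0$'' clause is subsumed). But the polynomial-method step has no content beyond the setting of Theorem~\ref{resultprime}, and the obstruction you flag is in fact fatal in a sharper way than you state: the full product polynomial $F$ has total degree on the order of $9m^2$ (about $9$ linear factors for each of the $\Theta(m^2)$ difference and sum constraints), so \emph{every} monomial in its expansion has some exponent $d_i\geq 9m-O(1)$, while the Combinatorial Nullstellensatz over $\Z_v^m$ requires $d_i<v\approx 6m$. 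Hence there is no admissible monomial at all, and your plan to ``compute the relevant coefficient in closed form'' has no candidate target --- one would first have to discard or re-encode a constant fraction of the constraints, which is precisely where the difficulty lives. The remaining suggestions (reversible systems as a special case; a nibble-plus-absorption argument for composite $v$) are reasonable directions but entirely unexecuted, and note that the machinery of Theorem~\ref{thm:asympt} only ever produces \emph{almost}-complete systems of representatives, with no mechanism for respecting the $\pm$ symmetry or for finishing exactly; so even the asymptotic version of Conjecture~\ref{strong conjecture} would require a new idea. In summary: correct reformulation, accurate identification of the obstacles, no proof --- which is the expected outcome for a statement the paper itself presents as unresolved.
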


So far it is only known that Conjecture \ref{strong conjecture} holds for all $v\equiv 1\pmod{6}$ and $v\leq 61$ (see \cite[Theorem 22.3]{triplesystem}).

Finally we remark that in a recent paper \cite{bbgrt} a new concept of ``doubly disjoint difference family" was introduced to establish a composition construction for resolvable difference families. Roughly speaking, if we take $k=3$ and $\l=1$ in Theorem \ref{thm:asympt}, then the induced cyclic difference family ``almost" forms a doubly disjoint difference family.

\section*{Acknowledgments}

Research for this paper was carried out while the first and third authors were visiting Monash University. They express their sincere thanks to the School of Mathematics at Monash University for its kind hospitality.

\end{document}